\theoremstyle{plain}
\newtheorem{theorem}{Theorem}[section]
\newtheorem{corollary}[theorem]{Corollary}
\newtheorem{lemma}[theorem]{Lemma}
\newtheorem{proposition}[theorem]{Proposition}
\theoremstyle{definition}
\newtheorem{definition}[theorem]{Definition}
\newtheorem{remark}[theorem]{Remark}
\numberwithin{equation}{section}
\newcommand{\kuo}[1]{\left( #1 \right)}
\newcommand{\zkuo}[1]{\left[ #1 \right]}
\newcommand{\dkuo}[1]{\left \{ #1 \right \}}
\newcommand{\e}{\mathbb{E}}
\newcommand{\abs}[1]{\left|#1\right|}
\newcommand{\norm}[1]{\left\|#1\right\|}
\newcommand{\brackt}[1]{\left\langle #1 \right\rangle}
\newcommand{\skpen}{u^{n,h}}
\newcommand{\contro}{Y^\varepsilon}
\newcommand{\pencontro}{Y^{n,\varepsilon}}
\newcommand{\penhsk}{Z^{n,\varepsilon}}
\begin{document}

\title{Large deviation principle for white noise SPDEs with oblique reflection}
\author{Hong Shaopeng}
\address{Department of Statistic and Data Science, School of Economics, Jinan University, Guangzhou, Guangdong, PR
China}
\email{hsp1999@stu2021.jnu.edu.cn}
\author{Liu Xiangdong}
\address{Department of Statistic and Data Science, School of Economics, Jinan University, Guangzhou, Guangdong, PR
China}
\email{tliuxd@jnu.edu.cn}
\subjclass[2020]{Primary 60H15, Secondary 60F10}

\keywords{Stochastic partial differential equations, Oblique reflection, Large deviation principle, Weak convergence}

\begin{abstract}
In this paper, we consider Fredlin-Wentzell type large deviation principle (LDP) of multidimensional reflected stochastic partial differential equations in a convex domain, allowing for oblique direction of reflection. To prove the LDP, a sufficient condition for the weak convergence method and penalized method plays an important role.
\end{abstract}
\maketitle
\tableofcontents

\section{Introduction}
%In this paper, we study Fredlin-Wentzedll type large deviation of  white noise driven SPDEs whose solution is constrained to stay in an open convex domain, hereafter denoted $\mathcal{O}$. 

In recent years, stochastic partial differential equations (SPDEs) with reflection have become a highly researched topic in the field of stochastic analysis. Many scholars in this field have devoted significant research efforts to it. As an effective mathematical model, reflection SPDE has been widely applied in various fields, such as the description of limit order books and the characterization of multiphase systems, see e.g. \cite{hambly2020limit, muller2018stochastic} and their references.
As pioneers in this research area, \cite{DonatiMartin1993WhiteND} and \cite{Nualart1992WhiteND} respectively studied the existence of solutions for additive noise SPDEs and quasi-linear additive noise SPDEs using penalization methods. Meanwhile, \cite{Xu2009WhiteND} considered the well-posedness of parabolic SPDEs with reflection and proved the existence and uniqueness of solutions by utilizing comparison theorems. Compared to penalization methods, their approach provides a more concise proof and uses weak convergence methods \cite{budhiraja2000variational} to prove the large deviation principle of the SPDEs. \cite{Zhang2011WHITEND} used penalization methods to prove the existence and uniqueness of solutions for SPDEs with two reflecting walls. For reflection SPDEs in convex domains, since there are no comparison theorems for these types of equations, \cite{Zhang2011SystemsOS} obtained the uniqueness of solutions using finite-dimensional approximation methods and obtained the existence of solutions using penalization methods. Recently, \cite{Duan2019WhiteND} studied the well-posedness of oblique reflection SPDEs in convex domains and obtained the existence and uniqueness of solutions using penalization methods and a series of refined prior estimates.

The large deviation principle is an important tool for studying rare events and has always been a hot topic in the field of probability. \cite{freidlinRandomPerturbationsDynamical2012} first considered the large deviation principle for Markovian stochastic differential equations . In recent years, the weak convergence method proposed by \cite{budhiraja2019Analysis} has been well developed. This method uses variational inequalities to avoid the complex inequality estimates required when proving LDP.
Using the weak convergence method and the variational representation of Brownian motion, \cite{matoussi2021Large} proposed another sufficient condition for large deviations, which is more suitable for SPDEs with reflected than the original condition. This new sufficient and necessary condition has been widely applied to the proof of large deviations for reflecting SPDEs, see e.g. \cite{wang2022large, wang2022largeSD}. 

We note that while there has been a lot of research on the properties of reflected SPDEs, such as \cite{Dalang2004HittingPO,10.1214/16-AAP1186,Zhang2012LargeDF,XIE20195254, niu2019wang,zhang2010White} and their references, there has been relatively little study of oblique reflection SPDEs in convex domains. The aim of this paper is to establish a large deviation principle for oblique reflection SPDEs in convex domains using weak convergence methods.

The rest of the paper is organized as follows. In section \ref{sec:2}, we introduce the small noise white noise SPDEs with oblique reflection,  recall the weak convergence method of LDP and present a sufficient condition. In section \ref{sec:3}, we prove the existence and uniqueness of skeleton equation by penalized method. In section \ref{sec:4}, we show the LDP of small noise white noise SPDEs with oblique reflection.

We end this section with some notations. Let $H=L^2\left([0,1], \mathbb{R}^d\right)$ be the usual $L^2$-space with norm $\|\cdot\|_H$ and inner product $\langle\cdot, \cdot\rangle$. Denote by $\mathrm{V}$ the Sobolev space of order one, that is to say $\mathrm{V}$ is the completion of $C_0^{\infty}(0,1)$ under the norm $\|\cdot\|_V$ defined by $\|u\|_V^2=\int_0^1 \sum_{i=1}^d\left(\frac{\partial u_i}{\partial x}\right)^2 d x$. The corresponding inner product will be denoted by $\langle\cdot, \cdot\rangle_V$. Let $H^2$ be the Sobolev space of order two: $H^2=\left\{u \in V: \frac{\partial^2 u_i}{\partial x^2} \in H\right\}$. Let $\|\cdot\|_{H^2}$ be the norm of $H^2$ and $\langle\cdot, \cdot\rangle_d$ be the inner product in $\mathbb{R}^d$.
\section{Framwork}\label{sec:2}
\subsection{Small noise white noise SPDEs with oblique reflection}
Let $\mathcal{O}$ be a smooth, bounded convex domain in $\mathbb{R}^d$. Without loss of generality, we may assume $0 \in \mathcal{O}$. The following system of stochastic partial differential equations with oblique reflection:
\begin{equation}
\left\{\begin{aligned}
d u_i(t, x)=& \frac{\partial^2 u_i(t, x)}{\partial x^2} d t+b_i(u(t, x)) d t+\sum_{j=1}^m \sigma_{i j}(u(t, x)) d B_j(t) \\
&-\gamma_i(u(t, x)) d k_i(t, x), \quad x \in[0,1], \quad i=1, \ldots, d \\
u(0, \cdot)=&\left(u_1(0, \cdot), u_2(0, \cdot), \cdots, u_d(0, \cdot)\right)^T \in \overline{\mathcal{O}} \\
u(t, 0)=& u(t, 1)=0
\end{aligned}\right.,
\end{equation}
where $B=\left(B_1, B_2, \cdots, B_m\right)$ is an $m$-dimensional Brownian motion on a complete probability space $\left(\Omega,\left\{\mathcal{F}_t\right\}_{t \geq 0}, P\right)$, where $\left\{\mathcal{F}_t\right\}=\sigma(B(s): 0 \leq s \leq t)$. $u(0, \cdot)$ is a continuous function on $[0,1]$, which takes values in $\overline{\mathcal{O}}$ and vanishes at 0 and 1. Moreover, $\gamma$ is a smooth vector field on $\mathbb{R}^d$ standing for the direction of oblique reflection when the solution process $u$ hits the boundary $\partial \mathcal{O}$. $ k(t, x)=$ $\left(k_1(t, x), k_2(t, x), \cdots, k_d(t, x)\right)^T$, an $\mathbb{R}^d$-valued random measure, represents the size of force pushing the solution process back into the domain $\mathcal{O}$. The coefficients $b(y)=\left(b_1(y), b_2(y), \cdots, b_d(y)\right)^T$ and $\sigma(y)=$
$\left(\sigma_{i, j}(y), 1 \leq i \leq d, 1 \leq j \leq m\right)$ are measurable mappings from $\mathbb{R}^d$ into $\mathbb{R}^d$ and respectively from $\mathbb{R}^d$ into $\mathbb{R}^d \otimes \mathbb{R}^m$.

In this paper, We consider small noise SPDEs with oblique reflection \eqref{eq:sn_SPDE}.\begin{equation}\label{eq:sn_SPDE}
\left\{\begin{aligned}
d u^\varepsilon_i(t, x)=& \frac{\partial^2 u^\varepsilon_i(t, x)}{\partial x^2} d t+b_i(u^\varepsilon(t, x)) d t+\sqrt{\varepsilon}\sum_{j=1}^m \sigma_{i j}(u^\varepsilon(t, x)) d B_j(t) \\
&-\gamma_i(u^\varepsilon(t, x)) d k_i(t, x), \quad x \in[0,1], \quad i=1, \ldots, d \\
u^\varepsilon(0, \cdot)=&\left(u^\varepsilon_1(0, \cdot), u^\varepsilon_2(0, \cdot), \cdots, u^\varepsilon_d(0, \cdot)\right)^T \in \overline{\mathcal{O}} \\
u^\varepsilon(t, 0)=& u^\varepsilon(t, 1)=0
\end{aligned}\right..
\end{equation}

We need make some assumptions about domain $\mathcal{O}$, reflected direction and coefficients to make sure the the well-posedness of \eqref{eq:sn_SPDE}. 

\begin{enumerate}
	\item [\textbf{A.1}]Convex domain. $\mathcal{O}$ is a smooth, bounded convex domain in $\mathbb{R}^d$
	\item [\textbf{A.2}]Uniform interior cone. Let $x, z \in \mathbb{R}^d$ with $|z|=1$. For $b>0$ and $0 \leq a<1$, define the set
$$
C(x, z, b, a):=\left\{y \in \mathbb{R}^d:|y-x|<b,\langle y-x, z\rangle_d \geq a|y-x|\right\}
$$
Suppose that $\gamma \in C_b^2\left(\mathbb{R}^d\right)$ satisfies $|\gamma(x)|=1$ for $x \in \partial \mathcal{O}$. 
We have 
\begin{enumerate}[(A)]
\item  There exists $b>0$ and $0 \leq a<1$ such that for each $x \in \partial \mathcal{O}$,
$$
C(x, \gamma(x), b, a) \subset \mathcal{O}
$$
Let $n(x)$ be the unique outward unitary normal vector to $\partial \mathcal{O}$ at the point $x \in \partial \mathcal{O}$. If $\partial \mathcal{O} \in C^1$ and $\gamma$ is continuous, then condition (A) is equivalent to the following condition (B).
\item There exists $\rho>0$ such that for every $x \in \partial \mathcal{O}$,
$$
\langle\gamma(x), n(x)\rangle_d \geq \rho
$$
Condition (B) says that in the case of a smooth $\partial \mathcal{O}$, the uniform interior cone condition is equivalent to nontangentiality of $\partial \mathcal{O}$ to $\gamma$
\end{enumerate}

\item [\textbf{A.3}]Lipschitz condition. There exists a constant $C>0$ such that 
	$$
	|b(u)-b(v)| + |\sigma(u)-\sigma(v)|\leq C |u-v|
	$$
\end{enumerate}
\begin{remark}
	Since domain $\mathcal{O}$ is convex, then it satisfies uniform exterior sphere condition. That means there exists $r_0>0$ such that
	$$
B_{r_0}\left(x+r_0 n(x)\right) \subseteq \mathcal{O}^c, \forall x \in \partial \mathcal{O}
$$
where $n(x)$ is the unit outward normal and $B_\beta(x):=\left\{y \in \mathbb{R}^d:|y-x|<\beta\right\}, \beta>0$. Since $\mathcal{O}$ is convex, the unit outward normal equivalently there exists constant $C_0 \geq  1/(2r_0)$such that
$$
\langle x-y, n(x)\rangle_d+C_0|x-y|^2 \geq 0, \forall x \in \partial \mathcal{O}, \forall y \in \overline{\mathcal{O}}
$$
\end{remark}

From \cite{Lions1984StochasticDE} we have a Lemma on properties of the oblique vector field $\gamma$.
\begin{lemma}
Let $\gamma \in C_b^2\left(\mathbb{R}^d\right)$ satisfy $(2.3)$, then there exists a $d \times d$ symmetric matrix-valued function $\left(a_{i j}(x)\right)$ satisfying
$$
\begin{aligned}
&\left(a_{i j}(x)\right) \geq \theta I_d \quad \text { for some } \quad \theta>0, \quad a_{i j} \in C_b\left(\mathbb{R}^d\right) \\
&\sum_{j=1}^d a_{i j}(x) \gamma_j(x)=n_i(x) \quad \text { for } \quad 1 \leq i \leq d, \quad \forall x \in \partial \mathcal{O}.
\end{aligned}
$$
In particular, there exists $C_0 \geq 0$ such that
$$
C_0|x-y|^2+\sum_{i, j=1}^d a_{i j}(x)\left(x_i-y_i\right)\left(\gamma_j(x)\right) \geq 0, \quad \text { for all } x \in \partial \mathcal{O}, y \in \overline{\mathcal{O}}.
$$
In addition, if $\gamma \in C_b^1$ (resp. $\left.W^{1, \infty}\right)$, then $\left(a_{i j}\right) \in C_b^1\left(\right.$ resp. $\left.W^{1, \infty}\right)$. And if $\gamma \in C_b^2\left(\right.$ resp. $\left.W^{2, \infty}\right)$, then $\left(a_{i j}\right) \in C_b^2\left(\operatorname{resp} . W^{2, \infty}\right)$
\end{lemma}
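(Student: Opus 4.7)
The plan is to construct the matrix $(a_{ij}(x))$ first on the boundary $\partial\mathcal{O}$, where the algebraic identity $a(x)\gamma(x) = n(x)$ must be enforced, and then extend it smoothly to $\mathbb{R}^d$ while preserving symmetry, uniform positive-definiteness, and regularity.

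For $x \in \partial\mathcal{O}$, I would use the symmetric ansatz
\[
a(x) \;=\; \frac{n(x)\, n(x)^{T}}{\langle n(x),\gamma(x)\rangle_d} \;+\; \mu\bigl(I_d - \gamma(x)\gamma(x)^{T}\bigr),
\]
where $\mu>0$ is a constant to be tuned. Since $|\gamma(x)|=1$ on $\partial\mathcal{O}$, a direct calculation gives $a(x)\gamma(x)=n(x)$. Working in an orthonormal basis adapted to the plane $\mathrm{span}\{\gamma(x),n(x)\}$, the matrix $a(x)$ reduces to $\mu I$ on the orthogonal complement and to a $2\times 2$ block with determinant $\mu\langle n,\gamma\rangle_d$ and trace $1/\langle n,\gamma\rangle_d + \mu$ in this plane. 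Combined with the uniform nontangentiality $\langle n(x),\gamma(x)\rangle_d \geq \rho > 0$ from condition (B), this yields a uniform lower bound $a(x) \geq \theta I_d$ with $\theta>0$ depending only on $\mu$ and $\rho$, together with a uniform upper bound.

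Next, I would extend $a$ off the boundary. In a tubular neighborhood $U$ of $\partial\mathcal{O}$, the nearest-point projection $\pi\colon U \to \partial\mathcal{O}$ is well-defined and smooth because $\partial\mathcal{O}$ is smooth, so one can replace $n(x)$ and $\gamma(x)$ in the formula by $n(\pi(x))$ and $\gamma(\pi(x))$, obtaining a matrix field whose regularity in $U$ matches that of $\gamma$ and of the boundary. Outside a smaller tubular shell I would set $a \equiv \theta_0 I_d$ for a small $\theta_0 \in (0,\theta)$ and interpolate via a smooth partition of unity. Because convex combinations of symmetric matrices bounded below by $\theta_0 I_d$ retain the same lower bound, the resulting global matrix field $(a_{ij})$ lies in $C_b^k(\mathbb{R}^d)$ (or the corresponding Sobolev space) with exactly the regularity inherited from $\gamma$, and it satisfies all the stated properties.

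The final inequality is then automatic: for $x \in \partial\mathcal{O}$ and $y \in \overline{\mathcal{O}}$,
\[
\sum_{i,j=1}^{d} a_{ij}(x)(x_i - y_i)\gamma_j(x) \;=\; \sum_{i=1}^{d}(x_i - y_i)\, n_i(x) \;=\; \langle x-y,\, n(x)\rangle_d,
\]
so the claim reduces to the uniform exterior-sphere inequality $\langle x-y, n(x)\rangle_d + C_0|x-y|^2 \geq 0$ already recorded in the remark. The delicate step is the pointwise eigenvalue control in the $\gamma$-$n$ plane: one must verify that the lower bound produced by the $2\times 2$ block is genuinely uniform across $\partial\mathcal{O}$, which is precisely where the nontangentiality constant $\rho$ enters. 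Everything else is either algebraic manipulation or a standard partition-of-unity gluing.
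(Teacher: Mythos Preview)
The paper does not prove this lemma at all; it simply states the result and attributes it to Lions--Sznitman (\cite{Lions1984StochasticDE}). So there is no ``paper's proof'' to compare against.

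Your construction is correct and is essentially the standard one from that reference. The ansatz
\[
a(x) \;=\; \frac{n(x)\,n(x)^{T}}{\langle n(x),\gamma(x)\rangle_d} \;+\; \mu\bigl(I_d - \gamma(x)\gamma(x)^{T}\bigr)
\]
on $\partial\mathcal{O}$ does the job: the verification $a(x)\gamma(x)=n(x)$ is immediate from $|\gamma|=1$, and your $2\times 2$ block computation (trace $1/\langle n,\gamma\rangle_d + \mu$, determinant $\mu\langle n,\gamma\rangle_d$) together with $\langle n,\gamma\rangle_d \in [\rho,1]$ gives a uniform lower eigenvalue bound, e.g.\ $\lambda_{-} \geq \mu\rho^{2}/(1+\mu)$. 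The extension step via the nearest-point projection $\pi$ in a tubular neighborhood, followed by a partition-of-unity gluing with a constant multiple of the identity, is also standard and preserves symmetry, positive-definiteness, and the regularity inherited from $\gamma$ (since $\pi$ is $C^{\infty}$ for a smooth boundary). Your reduction of the final inequality to the uniform exterior-sphere condition is exactly right.

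One small remark: when you interpolate with $\theta_0 I_d$ for $\theta_0 \in (0,\theta)$, the resulting global lower bound is $\theta_0$, not $\theta$; you should just relabel and take this smaller constant as the $\theta$ in the statement. Otherwise the argument is complete.
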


We give the definition of the solution of \eqref{eq:sn_SPDE}.
\begin{definition}{}{}
A pair $\kuo{u^\varepsilon, \eta^\varepsilon}$ is said to be a solution of \eqref{eq:sn_SPDE} if 
\begin{enumerate}[(i)]
	\item $u^\varepsilon$ is a continuous random field on $\mathbb{R}_{+} \times[0,1] ; u(t, x)$ is $\mathcal{F}_t$ measurable and $u(t, x) \in \overline{\mathcal{O}}$ a.s.
	\item $\eta^\varepsilon$ is an $\mathbb{R}^d$-valued random vector on $\mathbb{R}_{+} \times[0,1]$ such that
	\begin{enumerate}
	\item $E\left[Var_{Q_T}(\eta^\varepsilon )\right]<\infty, \forall T \geq 0$, where $Var_{Q_T}(\eta^\varepsilon)$ denotes the total variation of $\eta$ on $Q_T=[0, T] \times[0,1]$.
	\item $\eta^\varepsilon$ is adapted in the sense that for any bounded measurable mapping $\psi$ :
$$
\int_0^t \int_0^1 \psi(s, x) \eta^\varepsilon(d s, d x) \text { is } \mathcal{F}_t \text { measurable. }
$$
	\end{enumerate}
	\item  $(u^\varepsilon,\eta^\varepsilon)$ solves the parabolic PDE with oblique reflection in the following sense: for any $t \in \mathbb{R}_+$, $\varphi\in C^2_0\kuo{[0,1];\mathbb{R}^d}$ with $\varphi(0)= \varphi(1) = 0$,
	$$
\begin{aligned}
&\langle u^\varepsilon(t), \varphi\rangle-\int_0^t\left\langle u^\varepsilon(s), \varphi^{\prime \prime}\right\rangle d s-\int_0^t\langle b(u^\varepsilon(s)), \varphi\rangle d s \\
&=\langle u(0), \varphi\rangle+\sqrt{\varepsilon}\sum_{k=1}^m \int_0^t\left\langle\sigma_k(u^\varepsilon(s)), \varphi\right\rangle d B_k(s)-\int_0^t \int_0^1 \varphi(x) \eta^\varepsilon(d s, d x) \quad a . s .,
\end{aligned}
$$
 \item for any $\phi\in C([0,T]\times[0,1];\bar{\mathcal{O}})$ 
	$$
	\int_0^T\int_0^1\brackt{u^\varepsilon(t,x)-\phi(t,x),\kuo{a_{ij}(u)}\eta^\varepsilon(dt,dx)}_d\geq 0
	$$

\end{enumerate}
\end{definition}

\subsection{Weak convergence approach: the abstract setting}
\begin{definition}[Large deviation]{}
 A family $\left\{X^{\varepsilon}\right\}_{\varepsilon>0}$ of $\mathcal{E}$-valued random variable is said to satisfy the large deviation principle on $\mathcal{E}$, with the good rate function $I$ and with the speed function $\lambda(\varepsilon)$ which is a sequence of positive numbers tending to $+\infty$ as $\varepsilon \rightarrow 0$, if the following conditions hold:
 \begin{enumerate}
 	\item for each $M<\infty$, the level set $\{x \in \mathcal{E}: I(x) \leq M\}$ is a compact subset of $E$;
 	\item for each closed subset $F$ of $\mathcal{E}, \limsup _{\varepsilon \rightarrow 0} \frac{1}{\lambda(\varepsilon)} \log \mathbb{P}\left(X^{\varepsilon} \in F\right) \leq-\inf _{x \in F} I(x)$;
 	\item for each open subset $G$ of $\mathcal{E}, \liminf _{\varepsilon \rightarrow 0} \frac{1}{\lambda(\varepsilon)} \log \mathbb{P}\left(X^{\varepsilon} \in G\right) \geq-\inf _{x \in G} I(x)$.
 \end{enumerate}
\end{definition}
 We recall here several results from \cite{budhiraja2019Analysis} and \cite{Matoussi2017LargeDP} which give an abstract framework of LDP and a sufficient condition.
 
 Let $\mathcal{H}$ be a Cameron-Martin space.
 $$
\mathcal{H}:=\left\{h:[0, T] \rightarrow \mathbb{R} ; h \text { is absolutely continuous and } \int_0^T|\dot{h}(s)|^2 d s<+\infty\right\}.
$$
 The space $\mathcal{H}$ is a Hilbert space with inner product $\left\langle h_1, h_2\right\rangle_{\mathcal{H}}:=\int_0^T\left\langle\dot{h}_1(s), \dot{h}_2(s)\right\rangle d s$. The Hilbert space $\mathcal{H}$ is endowed with the weak topology, i.e., for any $h_n, h \in \mathcal{H}, n \geq 1$, we say that $h_n$ converges to $h$ in the weak topology, if for any $g \in \mathcal{H}$,
$$
\left\langle h_n-h, g\right\rangle_{\mathcal{H}}=\int_0^T\left\langle\dot{h}_n(s)-\dot{h}(s), \dot{g}(s)\right\rangle d s \rightarrow 0, \quad \text { as } n \rightarrow \infty.
$$

Let $\mathcal{A}$ denote the class of real-valued $\left\{\mathcal{F}_t\right\}$-predictable processes $\phi$ belonging to $\mathcal{H}$ a.s. Let
$$
S_N:=\left\{h \in \mathcal{H} ; \int_0^T|\dot{h}(s)|^2 d s \leq N\right\} .
$$
$S_N$ is endowed with the weak topology induced from $\mathcal{H}$. Define
$$
\mathcal{A}_N:=\left\{\phi \in \mathcal{A} ; \phi(\omega) \in S_N, \mathbb{P} \text {-a.s. }\right\} .
$$

\begin{theorem}\label{thm:LDP_condition}
	For any $\varepsilon >0$, let $\mathcal{G}^\varepsilon:C([0,T];H)\rightarrow\mathcal{E}$ is a measurable mapping, $X^\varepsilon :=\mathcal{G}^\varepsilon\kuo{B(\cdot)}$. Assume there exists a measurable mapping $\mathcal{G}^0:C([0,T];H)\rightarrow \mathcal{E}$ such that 
	\begin{enumerate}
		\item [\textbf{(LDP1)}] For any $N<\infty$, $\dkuo{h^\varepsilon;\varepsilon>0}\in \mathcal{A}_N$ and $\delta>0$, we have 
		$$
		\lim_{\varepsilon \rightarrow 0}\mathbb{P}\kuo{\rho\kuo{Y^\varepsilon,Z^\varepsilon}>\delta} = 0
		$$
		where, $Y^\varepsilon = \mathcal{G}^\varepsilon\kuo{B(\cdot)+ \frac{1}{\sqrt{\varepsilon}}\int_0^{\cdot}\dot{h}^\varepsilon(s)ds}$, $Z^{\varepsilon}=\mathcal{G}^0\left(\int_0^{\cdot} \dot{h}^{\varepsilon}(s) d s\right)$, $\rho(\cdot,\cdot)$ is the metric of $\mathcal{E}$.
		\item [\textbf{(LDP2)}] For any $N<\infty$ and $\dkuo{h^\varepsilon;\varepsilon>0}\in S_N$, $h^\varepsilon \rightarrow h$ as $\varepsilon \rightarrow 0$, we have $\mathcal{G}^0\kuo{\int_{0}^\cdot \dot{h}^\varepsilon(s)ds}$ convergence to $\mathcal{G}^0\kuo{\int_0^{\cdot}\dot{h}(s)}ds$.
	\end{enumerate}
	Then, $\dkuo{X^\varepsilon}_{\varepsilon>0}$ satisfies LDP in $\mathcal{E}$, and rare function $I$ is 
	\begin{equation}
  I(g) = \inf_{\dkuo{h\in \mathcal{H},g = \mathcal{G}^0\kuo{\int_0^\cdot \dot{h}(s)ds}}}\dkuo{\frac{1}{2}\norm{h}^2_{\mathcal{H}}}.
\end{equation}
We denote $\inf \varnothing = \infty$
\end{theorem}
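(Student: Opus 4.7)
The plan is to reduce the LDP to the equivalent Laplace principle and exploit the Budhiraja--Dupuis variational representation of Brownian functionals. Since $\mathcal{E}$ is Polish, the LDP with good rate function $I$ (at speed $1/\varepsilon$) is equivalent to $I$ having compact level sets together with
\[
\lim_{\varepsilon \to 0} -\varepsilon \log \mathbb{E}\bigl[\exp(-\varepsilon^{-1} f(X^\varepsilon))\bigr] = \inf_{g \in \mathcal{E}}\{f(g) + I(g)\}
\]
for every $f \in C_b(\mathcal{E})$. Applying the variational representation to $F = \varepsilon^{-1} f \circ \mathcal{G}^\varepsilon$ and substituting $u = \dot h^\varepsilon/\sqrt{\varepsilon}$ yields
\[
-\varepsilon \log \mathbb{E}[\exp(-\varepsilon^{-1}f(X^\varepsilon))] = \inf_{h^\varepsilon \in \mathcal{A}} \mathbb{E}\Bigl[\tfrac{1}{2}\|h^\varepsilon\|_{\mathcal{H}}^2 + f(Y^\varepsilon)\Bigr],
\]
so the two abstract hypotheses (LDP1) and (LDP2) plug directly into the right-hand side.

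Compactness of the sublevel set $\{I \leq M\}$ follows from (LDP2) and the weak compactness of $S_N$. Given $g_n$ with $I(g_n) \leq M$, pick $h_n \in \mathcal{H}$ with $\tfrac{1}{2}\|h_n\|_{\mathcal{H}}^2 \leq M + 1/n$ and $g_n = \mathcal{G}^0(\int_0^\cdot \dot h_n(s)\, ds)$, so that $h_n \in S_{2M+2}$ for $n$ large; extract a weakly convergent subsequence $h_{n_k} \to h \in S_{2M+2}$, use (LDP2) to obtain $g_{n_k} \to g := \mathcal{G}^0(\int_0^\cdot \dot h(s)\, ds)$ in $\mathcal{E}$, and invoke weak lower semicontinuity of $\|\cdot\|_{\mathcal{H}}^2$ to conclude $I(g) \leq \tfrac{1}{2}\|h\|_{\mathcal{H}}^2 \leq M$.

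For the Laplace upper bound, fix $\delta > 0$ and choose $g^\ast \in \mathcal{E}$ together with $\tilde h \in \mathcal{H}$ satisfying $g^\ast = \mathcal{G}^0(\int_0^\cdot \dot{\tilde h}(s)\, ds)$ and $\tfrac{1}{2}\|\tilde h\|_{\mathcal{H}}^2 + f(g^\ast) \leq \inf_g\{f(g) + I(g)\} + 2\delta$. Feeding the \emph{deterministic} competitor $\tilde h$ into the variational representation gives $-\varepsilon \log \mathbb{E}[\exp(-\varepsilon^{-1}f(X^\varepsilon))] \leq \tfrac{1}{2}\|\tilde h\|_{\mathcal{H}}^2 + \mathbb{E}[f(Y^\varepsilon)]$ with $Z^\varepsilon \equiv g^\ast$; condition (LDP1) then forces $Y^\varepsilon \to g^\ast$ in probability, and bounded convergence together with continuity of $f$ yields $\mathbb{E}[f(Y^\varepsilon)] \to f(g^\ast)$. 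Sending $\delta \to 0$ closes this half.

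For the Laplace lower bound, pick $\delta$-near-optimal controls $h^\varepsilon$; the trivial choice $h \equiv 0$ shows the infimum in the representation is bounded by $\|f\|_\infty$, so $\mathbb{E}\|h^\varepsilon\|_{\mathcal{H}}^2$ is uniformly bounded and, after a standard truncation, $h^\varepsilon \in \mathcal{A}_N$ for some $N$. Along a subsequence realising the $\liminf$, view each $h^{\varepsilon_k}$ as a random element of the Polish space $S_N$ (weak topology) and invoke a Jakubowski--Skorohod representation to obtain, on an auxiliary probability space, a further subsequence with $h^{\varepsilon_k} \to h$ almost surely weakly; (LDP2) applied pathwise gives $Z^{\varepsilon_k} \to \mathcal{G}^0(\int_0^\cdot \dot h(s)\, ds)$ a.s., (LDP1) propagates this to $Y^{\varepsilon_k}$ in probability, and weak lower semicontinuity of $\|\cdot\|_{\mathcal{H}}^2$ combined with Fatou's lemma deliver
\[
\liminf_{\varepsilon \to 0} \mathbb{E}\Bigl[\tfrac{1}{2}\|h^\varepsilon\|_{\mathcal{H}}^2 + f(Y^\varepsilon)\Bigr] \geq \mathbb{E}\Bigl[\tfrac{1}{2}\|h\|_{\mathcal{H}}^2 + f\bigl(\mathcal{G}^0(\textstyle\int_0^\cdot \dot h(s)\, ds)\bigr)\Bigr] \geq \inf_g\{f(g) + I(g)\}.
\]
The delicate point throughout is precisely this lower bound: coupling the weak-topology compactness of $S_N$ with the metric convergence supplied by (LDP1) for \emph{random} controls demands a Skorohod-type embedding that carries the control and the perturbed functional simultaneously, and one must additionally verify that the limiting control remains adapted to an appropriate Brownian filtration on the new probability space.
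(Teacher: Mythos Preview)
The paper does not prove this theorem at all: it is stated as a recalled sufficient condition, with the sentence ``We recall here several results from \cite{budhiraja2019Analysis} and \cite{Matoussi2017LargeDP}'' immediately preceding it, and no proof is given. Your proposal therefore cannot be compared against a proof in the paper, because there is none; the theorem is quoted from the literature and used as a black box.

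That said, your outline is essentially the standard Budhiraja--Dupuis--Matoussi argument that those references contain: equivalence of LDP and Laplace principle on Polish spaces, the variational representation for Brownian functionals, compactness of level sets from (LDP2) plus weak compactness of $S_N$, the upper bound via a deterministic near-minimiser, and the lower bound via near-optimal random controls combined with tightness in $S_N$ and a Skorohod-type coupling. One point worth tightening: in the lower bound you invoke (LDP1) \emph{after} passing to a new probability space via Skorohod, but (LDP1) is stated on the original space for controls in $\mathcal{A}_N$; the usual way around this is to note that the joint law of $(h^{\varepsilon}, B)$ is what matters, and the Skorohod copies can be taken so that the new $B$ is still a Brownian motion with respect to a filtration making the new $h^{\varepsilon}$ predictable, so that (LDP1) transfers. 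You flag this yourself as the ``delicate point,'' and it is indeed where the care lies in the cited references.
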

 \section{Well-posedness of the Skeleton equation}\label{sec:3}
For any $h \in \mathcal{H}$, consider the PDEs with oblique reflection \eqref{eq:skeleton}. 
\begin{equation}\label{eq:skeleton}
\left\{\begin{aligned}
d u^h_i(t, x)=& \frac{\partial^2 u^h_i(t, x)}{\partial x^2} d t+b_i(u^h(t, x)) d t+\sum_{j=1}^m \sigma_{i j}(u^h(t, x)) \dot{h}(t) dt \\
&-\gamma_i(u^h(t, x)) d k_i(t, x), \quad x \in[0,1], \quad i=1, \ldots, d \\
u^h(0, \cdot)=&\left(u^h_1(0, \cdot), u^h_2(0, \cdot), \cdots, u^h_d(0, \cdot)\right)^T \in \overline{\mathcal{O}} \\
u^h(t, 0)=& u^h(t, 1)=0
\end{aligned}\right..
\end{equation}
Let 
$$
\eta^h(t,x) = \int_0^t \gamma\kuo{u^h(s,x)}dk(s,x).
$$
The following is the definition of a solution to \eqref{eq:skeleton}.
\begin{definition}{}
A pair $(u^h,\eta^h)$ is said to be a solution of \eqref{eq:skeleton} if 
\begin{enumerate}[(i)]
	\item $u^h$ is a continuous field on $\mathbb{R}_+\times[0,1]$ and $u^h(t,x) \in \bar{\mathcal{O}}$
	\item $\eta^h$ is an $\mathbb{R}^d$ valued vector on $\mathbb{R}_+ \times [0,1]$ such that
	\begin{enumerate}
	\item $Var_{Q_T}(\eta^h)< \infty$, where $Var_{Q_T}(\eta^h)$ denotes the total variation of $\eta^h$ on $Q_T = [0,T]\times[0,1]$.
	\item For any bounded measurable mapping $\psi$:
	$$
	\int_0^t\int_0^1\psi(s,x)\eta^h(dt,dx)
	$$
	is measurable.
	\end{enumerate}
	\item $(u^h,\eta^h)$ solves the parabolic PDE with oblique reflection in the following sense: for any $t \in \mathbb{R}_+$, $\varphi\in C^2_0\kuo{[0,1];\mathbb{R}^d}$ with $\varphi(0)= \varphi(1) = 0$,
	\begin{equation}
		\begin{aligned}
			&\brackt{u^h(t),\varphi(t)}-\int_0^t\brackt{u^h(s),\varphi^{\prime\prime}(s)}ds - \int_0^t\brackt{b(u^h(s)),\varphi(s)}ds\\
			&=\brackt{u^h(0),\varphi}+\sum_{k=1}^m\int_0^t\brackt{\sigma_k(u^h(s))\dot{h}_k(s),\varphi}ds -\int_0^t\int_0^1\varphi(x)\eta^h(ds,dx).
		\end{aligned}
	\end{equation} 
	\item for any $\phi\in C([0,T]\times[0,1];\bar{\mathcal{O}})$ 
	$$
	\int_0^T\int_0^1\brackt{u^h(t,x)-\phi(t,x),\kuo{a_{ij}(u)}\eta^h(dt,dx)}_d\geq 0.
	$$
\end{enumerate}
\end{definition}

\begin{theorem}{}\label{thm:1}
Skeleton equation \eqref{eq:skeleton} exists unique solution $(u^h,\eta^h)$ in $C([0,T];H)\cap L^2([0,T];V)$.
\end{theorem}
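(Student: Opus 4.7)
The plan is to construct $u^h$ by penalization, mirroring the scheme of \cite{Duan2019WhiteND} for the stochastic equation, and then to prove uniqueness via the symmetric matrix $(a_{ij})$ from Lemma 2.2. For each $n \geq 1$ I consider the penalized PDE
\begin{equation*}
\partial_t \skpen - \partial_{xx} \skpen = b(\skpen) + \sigma(\skpen)\dot h(t) - n\,\gamma(\skpen)\, d(\skpen, \bar{\mathcal{O}}),
\end{equation*}
with $\skpen(0,\cdot)=u^h(0,\cdot)$ and Dirichlet data at $x=0,1$. Since $b,\sigma$ are Lipschitz (A.3), $\gamma \in C^2_b$, and $y \mapsto d(y, \bar{\mathcal{O}})$ is $1$-Lipschitz, the penalized nonlinearity is globally Lipschitz, so the standard variational theory in the Gelfand triple $V \hookrightarrow H \hookrightarrow V^*$ (or a Galerkin approximation) produces a unique $\skpen \in C([0,T];H) \cap L^2([0,T];V)$.

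Next I derive a priori estimates uniform in $n$. Applying the chain rule to $\|\skpen(t)\|_H^2$, the diffusion yields $-\|\skpen\|_V^2$, the terms $b(\skpen)$ and $\sigma(\skpen)\dot h(t)$ are controlled by Lipschitz bounds together with $\int_0^T|\dot h|^2 ds \leq \|h\|_{\mathcal H}^2 \leq N$, and the penalty is signed using the uniform interior cone condition (A.2)(B): since $\langle \gamma(y), n(y)\rangle_d \geq \rho$ on $\partial \mathcal{O}$, one has $\langle \gamma(\skpen), \skpen - \Pi_{\bar{\mathcal{O}}}(\skpen)\rangle_d \gtrsim \rho\, d(\skpen, \bar{\mathcal{O}})$ whenever $\skpen \notin \bar{\mathcal{O}}$. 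Gronwall then gives
\begin{equation*}
\sup_n \Bigl(\sup_{t \leq T}\|\skpen(t)\|_H^2 + \int_0^T \|\skpen(s)\|_V^2\, ds + n\int_0^T\!\!\int_0^1 d(\skpen, \bar{\mathcal{O}})^2\, dx\, ds\Bigr) \leq C_{h,N},
\end{equation*}
and refined estimates in the spirit of \cite{Duan2019WhiteND} yield a uniform bound on the total variation of $\eta^{n,h}(dt,dx):= n\gamma(\skpen)\, d(\skpen, \bar{\mathcal{O}})\, dt\, dx$.

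I would then show that $\{\skpen\}$ is Cauchy in $C([0,T];H)\cap L^2([0,T];V)$. Subtracting the equations for indices $n$ and $m$ and testing with $\skpen-u^{m,h}$, the cross penalty contributions are handled using Lemma 2.2 and the inequality $C_0|x-y|^2 + \sum_{ij} a_{ij}(x)(x_i - y_i)\gamma_j(x)\geq 0$, producing a Gronwall estimate whose right-hand side is dominated by $n\int d(\skpen,\bar{\mathcal{O}})^2 + m\int d(u^{m,h},\bar{\mathcal{O}})^2$ and vanishes as $n,m\to\infty$. The limit $u^h$, together with the weak-$*$ limit $\eta^h$ of $\eta^{n,h}$ extracted from the BV bound, is then shown to satisfy (i)--(iv): (i)--(ii) are direct; (iii) follows by passing to the limit in the weak formulation; and (iv) follows as the limit of the corresponding approximate inequality, which for the penalized equation holds up to an error of order $d(\skpen,\bar{\mathcal{O}})$ absorbed by the penalty $n\, d(\skpen,\bar{\mathcal{O}})$, since $(a_{ij}(y))\gamma(y) = n(y)$ on $\partial \mathcal{O}$.

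The main obstacle is uniqueness, since oblique reflection rules out direct monotonicity in $\|u^h - \tilde u^h\|_H^2$. Given two solutions $(u^h,\eta^h)$ and $(\tilde u^h, \tilde \eta^h)$, I would invoke property (iv) with $\phi=\tilde u^h$ and, symmetrically, with $\phi = u^h$ (both admissible because the solutions are continuous and $\bar{\mathcal{O}}$-valued) to obtain non-negativity of the weighted reflection cross-terms $\int_0^T\!\!\int_0^1 \langle u^h-\tilde u^h,(a_{ij}(u^h))\eta^h(dt,dx)\rangle_d$ and its analogue. Combined with the chain rule on $\|u^h - \tilde u^h\|_H^2$, the slack $C_0|x-y|^2$ term from Lemma 2.2, Lipschitz bounds on $b,\sigma$, and $\int_0^T|\dot h|^2 ds\leq \|h\|_{\mathcal H}^2$, these estimates close a Gronwall inequality forcing $u^h\equiv \tilde u^h$ and hence $\eta^h\equiv \tilde \eta^h$.
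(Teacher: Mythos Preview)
Your existence plan is essentially the paper's: penalize, derive uniform $H$- and $V$-estimates, show $\{\skpen\}$ is Cauchy in $C([0,T];H)\cap L^2([0,T];V)$, and pass to the limit. One difference: in the Cauchy step the paper does not use the $(a_{ij})$ inequality at all. Instead it proves (via an $H^2$-estimate requiring $u(0)\in V$) that $\sup_t\|\skpen-\pi(\skpen)\|_{L^\infty}\to 0$ and combines this with the bounds $\sup_n n^2\int\|\skpen-\pi(\skpen)\|_H^2\,dt<\infty$ and $\sup_n n\int\|\skpen-\pi(\skpen)\|_{L^1}\,dt<\infty$; the cross-penalty terms are then controlled by H\"older and these estimates, not by any sign argument involving $a_{ij}$.

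The uniqueness argument has a genuine gap. If you apply the chain rule to the \emph{unweighted} quantity $\|u^h-\tilde u^h\|_H^2$, the reflection contributions are $-2\int\langle u^h-\tilde u^h,\gamma(u^h)\rangle_d\,dk_1+2\int\langle u^h-\tilde u^h,\gamma(\tilde u^h)\rangle_d\,dk_2$, i.e.\ pairings of $u^h-\tilde u^h$ with $\gamma$. But property (iv) and Lemma~2.2 control $(a_{ij})$-weighted pairings $\sum_{i,j}a_{ij}(u^h)(u^h_i-\tilde u^h_i)\gamma_j(u^h)$, which are not the same objects; the ``slack $C_0|x-y|^2$'' inequality simply does not apply to the unweighted term. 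To get a sign you must test with the weighted quadratic form
\[
\varphi(t)=\int_0^1\bigl[a_{ij}(u^h)+a_{ij}(\tilde u^h)\bigr](u^h_i-\tilde u^h_i)(u^h_j-\tilde u^h_j)\,dx,
\]
as the paper does. But then differentiating $a_{ij}(u^h)$ produces terms of the form $C\,|u^h-\tilde u^h|^2\,dK$, and since $K$ is only of bounded variation these cannot be absorbed by an ordinary Gronwall in $dt$. The paper closes this by the Lions--Sznitman device: choose $\Phi\in C^2_b(\mathbb{R}^d)$ with $\langle\nabla\Phi(u),\gamma(u)\rangle_d\le -\alpha C_0<0$ on $\partial\mathcal{O}$, set $\phi(u)=\int_0^1\Phi(u(x))\,dx$, and study $e^{-\lambda(\phi(u^h_t)+\phi(\tilde u^h_t))}\varphi(t)$. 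The exponential weight generates a good term $-\lambda\langle\nabla\Phi,\gamma\rangle\,|u^h-\tilde u^h|^2\,dK\ge \alpha C_0\lambda\theta\,|u^h-\tilde u^h|^2\,dK$ which, for $\lambda$ large enough, dominates the bad $|u^h-\tilde u^h|^2\,dK$ terms coming from $a_{ij}'$ and from the $C_0|x-y|^2$ slack; only then does Gronwall close. Your proposal is missing both the weighted quadratic form and this exponential weight.
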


For $y \in \mathbb{R}^d$, denote by $\pi(y)$ the projection of y onto the domain $\bar{\mathcal{O}}$. Since $\mathcal{O}$ is convex, $\pi$ is a contraction mapping, that means $\abs{\pi(x)-\pi(y)}\leq \abs{x-y}$, $\forall x \in \mathbb{R}^d$. Moreover, we may need the assumption:
\begin{equation}\label{eq:ass_gam}
  \exists \delta >0 \text{ such that } \brackt{\pi(x),\gamma(x)}_d \geq \delta, \quad x \in \mathbb{R}^d .
\end{equation}
 The penalized system of \eqref{eq:skeleton}.
 \begin{equation}\label{eq:penalized}
 \begin{aligned}
 	u^{n,h}(t,x) = & u(0,x) + \int_0^t \frac{\partial^2 u^{n,h}(s,x)}{\partial x^2}ds + \int_0^t b_i(u^{n,h}(s,x))ds + \sum_{j = 1}^m \int_0^t \sigma_{ij}(u^{n,h}(s,x))\dot{h}(s)ds \\
 	&- n\int_0^t \gamma(u^{n,h}(t,x))\abs{u^{n,h}(s,x) - \pi\kuo{u^{n,h}(s,x)}}ds.
 \end{aligned}
\end{equation}

We prepare a number of a priori estimates for \eqref{eq:penalized} in order to proof Theroem \ref{thm:1}
\begin{lemma}\label{lem:1}
	The following estimates hold.
	\begin{equation}\label{eq:lem_1_1}
  \sup_{n}\sup_{h \in \mathcal{S}_N} \sup_{0\leq t\leq T} \norm{u^{n,h}(t)}_H^4 < \infty
\end{equation}
	\begin{equation}\label{eq:lem_1_2}
\sup_n\sup_{h \in \mathcal{S}_N}\kuo{n\int_0^T dt \norm{u^{n,h}(t)}_H^2 \int_0^1 \left\langle  u^{n,h}(t,x),\gamma(u^{n,h}(t,x))\right\rangle_d\abs{u^{n,h}(t,x)-\pi(u^{n,h}(t,x))}dx}\leq L
\end{equation}
\end{lemma}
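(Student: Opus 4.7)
The plan is to apply the chain rule to $\|u^{n,h}(t)\|_H^4=(\|u^{n,h}(t)\|_H^2)^2$ along the penalized equation \eqref{eq:penalized}, treated as an evolution in $V'$, and then close the estimate via Gronwall. After integrating by parts on the Laplacian term using the Dirichlet boundary conditions, this yields the identity
\begin{align*}
\|u^{n,h}(t)\|_H^4 = \|u(0)\|_H^4 &+ \int_0^t 2\|u^{n,h}(s)\|_H^2 \Bigl[-2\|u^{n,h}(s)\|_V^2 + 2\langle u^{n,h}, b(u^{n,h})\rangle \\
&\quad + 2\langle u^{n,h}, \sigma(u^{n,h})\dot{h}(s)\rangle \\
&\quad -2n\int_0^1 \langle u^{n,h}, \gamma(u^{n,h})\rangle_d \,|u^{n,h}-\pi(u^{n,h})|\,dx\Bigr]\,ds.
\end{align*}

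For the drift term, the linear growth of $b$ implied by \textbf{A.3} gives $|\langle u,b(u)\rangle|\leq C(1+\|u\|_H^2)$, contributing at most $C(1+\|u^{n,h}\|_H^4)$. For the control term, the analogous estimate on $\sigma$ together with Cauchy--Schwarz and Young's inequality yields a bound of the form $C(1+\|u^{n,h}\|_H^4)(1+|\dot{h}(s)|^2)$, which is integrable on $[0,T]$ uniformly over $h\in S_N$ since $\int_0^T|\dot{h}(s)|^2\,ds\leq N$. The gradient term $-4\|u^{n,h}\|_H^2\|u^{n,h}\|_V^2$ is non-positive and may be dropped (or kept for absorbing lower-order errors).

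The critical step is the penalty contribution. Using \eqref{eq:ass_gam} I decompose $\langle u,\gamma(u)\rangle_d=\langle\pi(u),\gamma(u)\rangle_d+\langle u-\pi(u),\gamma(u)\rangle_d$: the first piece is bounded below by $\delta$, while for $u\notin\bar{\mathcal{O}}$ the vector $u-\pi(u)$ is outward-normal at $\pi(u)\in\partial\mathcal{O}$, so the nontangentiality condition \textbf{A.2(B)} combined with smoothness of $\gamma$ makes $\langle u-\pi(u),\gamma(u)\rangle_d$ non-negative up to an $O(|u-\pi(u)|^2)$ error. This renders the integrand $\langle u,\gamma(u)\rangle_d\,|u-\pi(u)|$ non-negative modulo a correction absorbable into the gradient or drift contribution, so the penalty term contributes non-positively to $d\|u^{n,h}\|_H^4/dt$. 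Gronwall's inequality, applied with $\int_0^T|\dot{h}|^2\,ds\leq N$, then gives \eqref{eq:lem_1_1}. For \eqref{eq:lem_1_2}, instead of dropping the penalty I rearrange the same identity so that it is isolated on one side; using \eqref{eq:lem_1_1} to bound the remaining (drift, control, initial) contributions uniformly in $n$ and $h\in S_N$, the claimed bound by a constant $L$ follows after dividing by $4$.

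The main obstacle is making the penalty integrand rigorously non-negative: assumption \eqref{eq:ass_gam} directly yields only $\langle\pi(u),\gamma(u)\rangle_d\geq\delta$, and the residual $\langle u-\pi(u),\gamma(u)\rangle_d$ is controlled only up to an $|u-\pi(u)|^2$ error from the Lipschitz continuity of $\gamma$. The bookkeeping required to absorb this error without generating an $n$-dependent constant---likely by exploiting the retained gradient term $-4\|u\|_H^2\|u\|_V^2$ together with the Sobolev embedding $V\hookrightarrow L^\infty$ in one spatial dimension, so that $|u-\pi(u)|^2$ in $x$ can be dominated by $\|u\|_V\cdot|u-\pi(u)|$---is the delicate part of the argument.
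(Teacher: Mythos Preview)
Your approach is essentially the paper's: apply the chain rule to $\|u^{n,h}(t)\|_H^4$, drop the non-positive Laplacian contribution after integration by parts, control the drift and control terms by linear growth together with Cauchy--Schwarz/Young and the constraint $\int_0^T|\dot h|^2\le N$, show the penalty contribution has a sign via the splitting $\langle u,\gamma(u)\rangle_d=\langle\pi(u),\gamma(u)\rangle_d+\langle u-\pi(u),\gamma(u)\rangle_d$, and close with Gronwall; then re-read the identity keeping the penalty term on the left to get \eqref{eq:lem_1_2}.

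The only point of divergence is your handling of the penalty sign, which you flag as the ``delicate part''. The paper does not share this concern: it simply asserts, pointwise,
\[
\bigl\langle u^{n,h}-\pi(u^{n,h}),\,\gamma(u^{n,h})\bigr\rangle_d\ \ge\ \rho\,\bigl|u^{n,h}-\pi(u^{n,h})\bigr|,
\]
invoking \textbf{A.2(B)} directly, and combines it with $\langle\pi(u),\gamma(u)\rangle_d\ge\delta$ from \eqref{eq:ass_gam} to conclude that the full penalty integrand is non-negative with \emph{no} error term. In effect the paper reads the nontangentiality condition as holding with $\gamma$ evaluated at $u$ rather than at $\pi(u)$ --- i.e.\ the $C_b^2$ extension of $\gamma$ off $\partial\mathcal{O}$ is tacitly taken so that the cone condition persists. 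Under that reading there is nothing to absorb, and your proposed mechanism (retaining the gradient term and using $V\hookrightarrow L^\infty$ to dominate an $O(|u-\pi(u)|^2)$ remainder) is neither needed nor what the paper does. Your instinct that the displayed inequality hides a choice of extension is correct, but the paper treats it as part of the standing hypotheses rather than as an analytic obstacle.
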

\begin{proof}
We have 
	\begin{equation}
  \begin{aligned}
\left\|u^{n, h}(t)\right\|_H^4&=\left\|u^{n, h}(0)\right\|_H^4+4 \int_0^t d s\left\|u^{n, h}(s)\right\|_H^2 \int_0^1\left\langle u^{n, h}(t, x), \frac{\partial^2 u^{n, h}(s, x)}{\partial x^2}\right\rangle_d d x \\
    &+ 4 \int_0^t \norm{\skpen(s)}^2_H \left\langle u^{n,h}(s) , b(\skpen(s)) \right\rangle_d ds \\
    &  +4\sum_{k=1}^m \int_0^t \norm{\skpen(s)}_H^2\left\langle\skpen,\sigma_k(\skpen(s))\dot{h}(s) \right\rangle ds\\
    & - 4n \int_0^t\norm{\skpen(s)}^2_H\left\langle\skpen(s),\gamma\kuo{\skpen(s)}\abs{\skpen(s,x) - \pi\kuo{\skpen(s,x)}} \right\rangle ds
    \end{aligned}
\end{equation}

Note that 
$$
4 \int_0^t d s\left\|\skpen(s)\right\|_H^2 \int_0^1\left\langle \skpen(s, x), \frac{\partial^2 \skpen(s, x)}{\partial x^2}\right\rangle_d d x=-4 \int_0^t\left\|\skpen(s)\right\|_H^2\left\|\frac{\partial \skpen(s)}{\partial x}\right\|_H^2 d s \leq 0
$$
$$
\begin{aligned}
&\left\langle \skpen(s), \gamma\left(\skpen(s)\right)\left|\skpen(s)-\pi\left(\skpen(s)\right)\right|\right\rangle \\
=& \int_0^1\left|\skpen(s, x)-\pi\left(\skpen(s, x)\right)\right|\left\langle u^n(s, x)-\pi\left(\skpen(s, x)\right), \gamma\left(\skpen(s, x)\right)\right\rangle_d d x \\
&+\int_0^1\left\langle\pi\left(\skpen(s, x)\right), \gamma\left(\skpen(s, x)\right)\right\rangle_d\left|\skpen(s, x)-\pi\left(\skpen(s, x)\right)\right| d x \\
\geq & \rho \int_0^1\left|\skpen(s, x)-\pi\left(\skpen(s, x)\right)\right|^2 d x+\delta \int_0^1\left|\skpen(s, x)-\pi\left(\skpen(s, x)\right)\right| d x \geq 0
\end{aligned}
$$

By Cauchy-Schwarz inequality and Young's inequality, we have
\begin{equation}
  \begin{aligned}
  	\sum_{k=1}^m\int_0^t &\norm{\skpen(s)}^2_H \left\langle u^{n,h}(s) , b(\skpen(s)) \right\rangle_d ds \\
  	&\leq \max _{0\leq s\leq T}\norm{\skpen(s)}^2_H \dkuo{\sum_{k=1}^m\int_0^t\brackt{\skpen(s,x),\sigma_k(\skpen(s,x))}\dot{h}_k(s)ds}\\
  	& \leq \max _{0\leq s\leq T}\norm{\skpen(s)}^2_H \int_0^t \int_0^1 u^{n, h}(s, x)\left(\sum_{j=1}^m\left|\sigma_j\left(u^{n, h}(s)\right)\right|^2\right)^{\frac{1}{2}}\left(\sum_{j=1}^m\left|\dot{h}_j(s)\right|^2\right)^{\frac{1}{2}} d x d s\\
  	&\leq \max _{0\leq s\leq T}\norm{\skpen(s)}^2_H C\left\{\int_0^t\left[\int_0^1 u^{n, h}(s, x)\left(1+\left|u^{n, h}(s, x)\right|^2\right)^{\frac{1}{2}} d x\right]^2 d s\right\}^{\frac{1}{2}}\left(\int_0^t|\dot{h}(s)|^2 d s\right)^{\frac{1}{2}}\\
  	& \leq C N\left(\sup _{0 \leq s \leq t}\left\|u^{n, h}(s)\right\|_H^2\right)\left(\int_0^t\left(1+\left\|u^{n, h}(s)\right\|_H^2\right) d s\right)^{\frac{1}{2}}\\
  	& \leq \frac{CN}{2}\left(\sup _{0 \leq s \leq t}\left\|u^{n, h}(s)\right\|_H^4\right) + \frac{1}{2}\left(\int_0^t\left(1+\left\|u^{n, h}(s)\right\|_H^2\right) d s\right)
  \end{aligned}
\end{equation}

We deduce that 
\begin{equation}
  \begin{aligned}
  	&\sup_{0\leq s\leq t}\norm{\skpen(s)}^4 + 4n \int_0^t\norm{\skpen(s)}^2_H\left\langle\skpen(s),\gamma\kuo{\skpen(s)}\abs{\skpen(s,x) - \pi\kuo{\skpen(s,x)}} \right\rangle ds \\
  	&\leq C \norm{\skpen(0)}^4_H + C \int_0^t\kuo{1+\norm{\skpen(s)}^4_H}ds 
  \end{aligned}
\end{equation}
which implies \eqref{eq:lem_1_1} and \eqref{eq:lem_1_2} by Gronwall's inequality.
\end{proof}
\begin{lemma}\label{lem:2}
 There exists a constant $M_T$ such that
$$
\sup_n\sup_{h\in \mathcal{S}_N} \left(n \int_0^T\left\|\skpen(t)-\pi\left(\skpen(t)\right)\right\|_{L^1([0,1])} d t\right)^2 \leq M_T, \quad T>0 
$$
\end{lemma}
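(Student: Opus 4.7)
The plan is to derive an $L^2$-energy identity for $u^{n,h}(t)$ (as opposed to the $L^4$ version used in Lemma \ref{lem:1}), exploit the same sign argument on the penalty term to obtain a lower bound directly proportional to $\|u^{n,h}(s)-\pi(u^{n,h}(s))\|_{L^1([0,1])}$, and then absorb the remaining contributions using the uniform bound \eqref{eq:lem_1_1} already at our disposal.

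Concretely, one applies the chain rule to obtain
\begin{align*}
\|u^{n,h}(t)\|_H^2 &= \|u^{n,h}(0)\|_H^2 + 2\int_0^t\langle u^{n,h}(s), \partial_{xx}u^{n,h}(s)\rangle\,ds + 2\int_0^t \langle u^{n,h}(s), b(u^{n,h}(s))\rangle\,ds\\
&\quad + 2\sum_{k=1}^m \int_0^t \langle u^{n,h}(s), \sigma_k(u^{n,h}(s))\dot{h}_k(s)\rangle\,ds \\
&\quad - 2n\int_0^t \langle u^{n,h}(s), \gamma(u^{n,h}(s))\,|u^{n,h}(s)-\pi(u^{n,h}(s))|\rangle\,ds.
\end{align*}
Integration by parts under the Dirichlet condition sends the Laplacian term to $-2\int_0^t \|\partial_x u^{n,h}(s)\|_H^2\,ds \leq 0$, so it can be dropped. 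The decomposition $u^{n,h}=(u^{n,h}-\pi(u^{n,h}))+\pi(u^{n,h})$ combined with the cone condition A.2(B) on the first summand and the standing hypothesis \eqref{eq:ass_gam} on the second (exactly as already recorded in the proof of Lemma \ref{lem:1}) yields
$$
\langle u^{n,h}(s), \gamma(u^{n,h}(s))\,|u^{n,h}(s)-\pi(u^{n,h}(s))|\rangle \geq \delta\,\|u^{n,h}(s)-\pi(u^{n,h}(s))\|_{L^1([0,1])}.
$$
Moving this contribution to the left-hand side and bounding the drift and $\sigma\dot{h}$ terms by Cauchy--Schwarz, the Lipschitz/linear-growth properties of $b$ and $\sigma$, and $\int_0^T|\dot{h}(s)|^2\,ds \leq N$, one arrives at
$$
2n\delta \int_0^T \|u^{n,h}(t)-\pi(u^{n,h}(t))\|_{L^1([0,1])}\,dt \leq \|u^{n,h}(0)\|_H^2 + C\int_0^T\bigl(1+\|u^{n,h}(s)\|_H^2\bigr)\,ds + C\sqrt{TN}\,\sup_{0\le s\le T}\bigl(1+\|u^{n,h}(s)\|_H^2\bigr).
$$
Feeding \eqref{eq:lem_1_1} into the right-hand side produces a constant independent of $n$ and of $h\in\mathcal{S}_N$, and squaring gives the desired $M_T$.

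The only genuine subtlety --- and the reason this statement is not a direct consequence of \eqref{eq:lem_1_2} --- is the choice of exponent. Working at the $L^4$-level as in Lemma \ref{lem:1} leaves an extra weight $\|u^{n,h}(s)\|_H^2$ inside the penalty term, which is precisely what \eqref{eq:lem_1_2} retains and what one cannot shed afterwards. Dropping to the $L^2$-level removes that weight but also removes the self-improving Gronwall loop that previously absorbed the $\sigma\dot{h}$ contribution, so \eqref{eq:lem_1_1} must be invoked directly at the final step to close the estimate. Beyond this bookkeeping I do not anticipate any further difficulty.
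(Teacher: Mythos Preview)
Your proposal is correct and follows essentially the same route as the paper: write the $L^2$-energy identity for $u^{n,h}$, discard the nonpositive Laplacian term, use the decomposition $u^{n,h}=(u^{n,h}-\pi(u^{n,h}))+\pi(u^{n,h})$ together with A.2(B) and \eqref{eq:ass_gam} to extract the lower bound $2\delta n\int_0^t\|u^{n,h}(s)-\pi(u^{n,h}(s))\|_{L^1}ds$ from the penalty term, and then close with the uniform bound \eqref{eq:lem_1_1} from Lemma~\ref{lem:1}. Your closing remark about why the $L^2$ (rather than $L^4$) identity is needed here is also to the point.
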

\begin{proof}
	We have 
	\begin{equation}\label{eq:lem_2_1}
\begin{aligned}
	\norm{\skpen(t)}^2_H =& \norm{\skpen(0)}^2_H + 2 \int_0^t\brackt{\skpen(s),\frac{\partial^2 \skpen(s)}{\partial x^2}}ds+2\int_0^t\brackt{\skpen(s),b(\skpen(s))}ds\\
	&+ 2 \sum_{k=1}^m\int_0^t\brackt{\skpen(s),\sigma_k(\skpen(s))}h_k(s)ds \\
	&-2n\int_0^t\brackt{\skpen(s),\gamma(\skpen(s))\abs{\skpen(s)-\pi\kuo{\skpen(s)}}}ds
\end{aligned}
\end{equation}
Note that $\left\langle \skpen(s), \frac{\partial^2 \skpen(s)}{\partial x^2}\right\rangle=-\left\|\skpen(s)\right\|_V^2$ and
$$
\left\|\sigma_k\left(\skpen(s)\right)\right\|_H+\left\|b\left(\skpen(s)\right)\right\|_H \leq C\left(1+\left\|\skpen(s)\right\|_H\right) .
$$
%还有一个
In view of \eqref{eq:ass_gam}, we have 
\begin{equation}\label{eq:lem_2_2}
\begin{aligned}
& 2 n \int_0^t\left\langle \skpen(s), \gamma\left(\skpen(s)\right)\left|\skpen(s)-\pi\left(\skpen(s)\right)\right|\right\rangle d s \\
=& 2 n \int_0^t d s \int_0^1\left|\skpen(s, x)-\pi\left(\skpen(s, x)\right)\right|\left\langle \skpen(s, x)-\pi\left(\skpen(s, x)\right), \gamma\left(\skpen(s, x)\right)\right\rangle_d d x \\
+& 2 n \int_0^t d s \int_0^1\left\langle\pi\left(\skpen(s, x)\right), \gamma\left(\skpen(s, x)\right)\right\rangle_d\left|\skpen(s, x)-\pi\left(\skpen(s, x)\right)\right| d x \\
\geq & 2 \delta n \int_0^t\left\|\skpen(s)-\pi\left(\skpen(s)\right)\right\|_{L^1([0,1])} d s
\end{aligned}
\end{equation}

Combing \eqref{eq:lem_2_1}, \eqref{eq:lem_2_2} and lemma \ref{lem:1} , we have 
\begin{equation}
  \begin{aligned}
  	&\sup_n\sup_{h\in \mathcal{S}_N} \left(n \int_0^T\left\|\skpen(t)-\pi\left(\skpen(t)\right)\right\|_{L^1([0,1])} d t\right)^2 \\
  &	\leq C + C \sup_n \sup_{h\in \mathcal{S}_N}\sup_{0\leq t \leq T}\norm{\skpen(t)}_H^4\leq M_T
  \end{aligned}
\end{equation}

\end{proof}
Mimicking the proof of \cite{dupuis1993sdes}[Lemma 4.5], we can prove the following result.
\begin{lemma}\label{lem:3}
	It holds that 
	\begin{equation}
  \sup_n\sup_{h\in \mathcal{S}_N} \kuo{n^2\int_0^T\norm{\skpen(t) - \pi\kuo{\skpen(t)}}^2_Hdt} \leq C_T
\end{equation}
for some positive constant $C_T$
\end{lemma}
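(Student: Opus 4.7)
The plan is to mimic the derivation of Lemma \ref{lem:2}, but to test the penalized equation \eqref{eq:penalized} against $\skpen(t,\cdot) - \pi(\skpen(t,\cdot))$ rather than against $\skpen(t,\cdot)$ itself. This shift makes every appearance of the penalty parameter $n$ contribute to the eventual bound, and a careful use of Young's inequality with a weight proportional to $n$ extracts the extra factor of $n$ (squared in total) required for this lemma compared with Lemma \ref{lem:2}.

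Concretely, I would first exploit the convexity of $\mathcal{O}$ to establish the differential identity
\begin{equation*}
\tfrac{1}{2}\tfrac{d}{dt}\norm{\skpen(t)-\pi(\skpen(t))}_H^2 = \int_0^1 \brackt{\skpen(t,x) - \pi(\skpen(t,x)),\, \partial_t \skpen(t,x)}_d dx,
\end{equation*}
valid because $\skpen - \pi(\skpen)$ lies in the outward normal cone to $\bar{\mathcal{O}}$ at $\pi(\skpen)$, so time-differentiating the projection contributes nothing. Substituting the evolution from \eqref{eq:penalized}, the Laplacian term, after integration by parts (the boundary contribution vanishes because $\skpen(t,0) = \skpen(t,1) = 0 \in \mathcal{O}$ forces $\skpen - \pi(\skpen) = 0$ at $x=0,1$), equals $-\int_0^1 \brackt{(I - D\pi(\skpen))\partial_x \skpen, \partial_x \skpen}_d dx \leq 0$ by convexity. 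The penalty term is bounded below by $n\rho\norm{\skpen(t) - \pi(\skpen(t))}_H^2$ exactly as in the proof of Lemma \ref{lem:2}, via A.2(B) applied along the normal direction $(\skpen-\pi(\skpen))/|\skpen-\pi(\skpen)|$.

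The drift and Girsanov-type terms are then controlled by Cauchy-Schwarz followed by Young's inequality with weight calibrated to $n\rho$, yielding the schematic differential inequality
\begin{equation*}
\tfrac{d}{dt}\norm{\skpen-\pi(\skpen)}_H^2 + n\rho\norm{\skpen-\pi(\skpen)}_H^2 \leq \tfrac{C}{n\rho}\bigl(\norm{b(\skpen)}_H^2 + \norm{\sigma(\skpen)\dot{h}(t)}_H^2\bigr).
\end{equation*}
Integrating over $[0,T]$, using $\skpen(0) - \pi(\skpen(0)) = 0$ since $u(0)\in\bar{\mathcal{O}}$, and multiplying through by $n/\rho$ gives
\begin{equation*}
n^2\int_0^T \norm{\skpen(t)-\pi(\skpen(t))}_H^2 dt \leq \tfrac{C}{\rho^2}\int_0^T \bigl(\norm{b(\skpen(t))}_H^2 + \norm{\sigma(\skpen(t))\dot{h}(t)}_H^2\bigr)dt.
\end{equation*}
The right-hand side is uniformly bounded in $n$ and $h\in\mathcal{S}_N$: linear growth of $b,\sigma$ combined with the uniform sup-in-$t$ bound on $\norm{\skpen(t)}_H^2$ from Lemma \ref{lem:1}, and the constraint $\int_0^T|\dot{h}|^2 ds \leq N$, together give a $C_T$ depending only on $T$ and $N$.

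The step I expect to be the main obstacle is the rigorous justification of the chain-rule identity in the first stage, since $u \mapsto \pi(u)$ is only Lipschitz and the squared-distance function $u\mapsto |u - \pi(u)|^2$ is only $C^{1,1}$; this is typically handled by smoothing (approximating $\pi$ or the squared-distance by a $C^2$ regularization and passing to the limit) or by appealing directly to the continuous generalized gradient $2(u-\pi(u))$ of the squared distance to a convex set. Equally important is the precise calibration of the Young weight: with a weight of order $n\rho$ one extracts the full $n^2$-scaling, whereas any weaker choice recovers only the $n$-scaling already contained in Lemma \ref{lem:2}.
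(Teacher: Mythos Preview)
Your proposal is correct and follows the approach the paper defers to (Dupuis--Ishii, Lemma~4.5): test the penalized equation against $\skpen-\pi(\skpen)$, use convexity of $\mathcal{O}$ for the sign of the Laplacian term and assumption A.2(B) for the penalty lower bound $n\rho\norm{\skpen-\pi(\skpen)}_H^2$, then calibrate Young's inequality with weight proportional to $n$ to extract the $n^2$ factor. Your regularity concern about the projection is mild, since the squared distance to a convex set is $C^{1,1}$ with a.e.\ positive-semidefinite Hessian $2(I-D\pi)$, which is precisely what the chain rule and the sign argument require; the paper's own proof of Lemma~\ref{lem:5} relies on the same mechanism.
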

\begin{lemma}\label{lem:4}
	Assume that $u(0)\in V$. Then we have follow estimates
	\begin{equation}\label{eq:lem_4_1}
  \sup_n\sup_{h \in \mathcal{S}_N}\sup_{0\leq t \leq T} \norm{\skpen}^2_V < \infty 
\end{equation}
\begin{equation}\label{eq:lem_4_2}
\sup_n\sup_{h \in \mathcal{S}_N}\int_0^T \norm{\skpen(t)}^2_{H^2} < \infty
\end{equation}
\end{lemma}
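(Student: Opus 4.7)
The plan is to perform an energy estimate at the $V$ level, i.e.\ to test the penalized equation \eqref{eq:penalized} against $-\partial_x^2 \skpen$. Using the zero boundary conditions $\skpen(t,0)=\skpen(t,1)=0$ and integration by parts, this is equivalent to the chain rule for $\|\skpen(t)\|_V^2$, giving the identity
\begin{equation*}
\tfrac{1}{2}\|\skpen(t)\|_V^2+\int_0^t\left\|\frac{\partial^2 \skpen(s)}{\partial x^2}\right\|_H^2 ds=\tfrac{1}{2}\|u(0)\|_V^2+I_b(t)+I_\sigma(t)+I_\gamma(t),
\end{equation*}
where $I_b$, $I_\sigma$, $I_\gamma$ are the three source-term contributions, each of the form $-\int_0^t\langle \partial_x^2\skpen(s),\,\cdot\,\rangle_H\,ds$ with $\,\cdot\,$ equal to $b(\skpen)$, $\sigma(\skpen)\dot h(s)$, and $-n\gamma(\skpen)|\skpen-\pi(\skpen)|$, respectively. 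If this chain rule is not directly available because of insufficient regularity of $\skpen$, I would first mollify in $x$, derive the bound for the smoothed equation, and pass to the limit, but this is a standard technicality.

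Next I would estimate the three source terms by Cauchy--Schwarz in $x$ and Young's inequality, at each step splitting off a small multiple $\varepsilon\|\partial_x^2\skpen\|_H^2$ to be absorbed later. For $I_b$, the linear growth of $b$ (from Assumption A.3) gives
$|I_b|\le \varepsilon\int_0^t\|\partial_x^2\skpen\|_H^2\,ds+C_\varepsilon\int_0^t(1+\|\skpen(s)\|_H^2)\,ds$.
For $I_\sigma$, I would factor out $|\dot h(s)|$ (which does not depend on $x$) and then apply Young's inequality to obtain
$|I_\sigma|\le \varepsilon\int_0^t\|\partial_x^2\skpen\|_H^2\,ds+C_\varepsilon\int_0^t|\dot h(s)|^2(1+\|\skpen(s)\|_H^2)\,ds$,
and the latter integral is bounded uniformly in $n$ and $h\in\mathcal{S}_N$ by combining $\int_0^T|\dot h|^2\le N$ with the $L^\infty_t L^2_x$ bound of Lemma \ref{lem:1}.

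The main obstacle is $I_\gamma$, since it carries the divergent prefactor $n$. Using $\gamma\in C_b^2$ and Cauchy--Schwarz,
\begin{equation*}
|I_\gamma|\le \|\gamma\|_\infty\int_0^t\|\partial_x^2\skpen(s)\|_H\cdot n\|\skpen(s)-\pi(\skpen(s))\|_H\,ds\le \varepsilon\int_0^t\|\partial_x^2\skpen\|_H^2\,ds+C_\varepsilon\|\gamma\|_\infty^2\,n^2\int_0^t\|\skpen-\pi(\skpen)\|_H^2\,ds.
\end{equation*}
This is precisely where Lemma \ref{lem:3} is indispensable: the quadratic-in-$n$ bound $n^2\int_0^T\|\skpen-\pi(\skpen)\|_H^2\,dt\le C_T$ exactly cancels the $n^2$ coming from Young's inequality. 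Choosing $\varepsilon$ small enough to absorb the three $\|\partial_x^2\skpen\|_H^2$ fractions into the left-hand side, and using Lemma \ref{lem:1} to bound the remaining $\|\skpen\|_H^2$ terms, I obtain directly
\begin{equation*}
\sup_{0\le t\le T}\|\skpen(t)\|_V^2+\int_0^T\|\partial_x^2\skpen(s)\|_H^2\,ds\le C(T,N,\|u(0)\|_V),
\end{equation*}
uniformly in $n$ and $h\in\mathcal{S}_N$. Together with the $H$-bound of Lemma \ref{lem:1} and the $V$-bound just derived, this yields both \eqref{eq:lem_4_1} and \eqref{eq:lem_4_2} without needing Gronwall, since all inhomogeneities are already controlled a priori.
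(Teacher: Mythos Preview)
Your proposal is correct and follows essentially the same approach as the paper: both test the penalized equation against $-\partial_x^2\skpen$ (equivalently, compute the $V$-energy), use integration by parts to produce the dissipative $\|\partial_x^2\skpen\|_H^2$ term, and control the penalty contribution by Cauchy--Schwarz/Young together with the crucial $n^2$-bound from Lemma~\ref{lem:3}. The only minor difference is that the paper estimates the $\sigma$-term via $\|\sigma_k(\skpen)\|_V^2\le C\|\skpen\|_V^2$, leaving a $\int_0^T\|\skpen\|_V^2\,ds$ on the right-hand side (so Gronwall is implicitly invoked), whereas your splitting absorbs everything into $\|\partial_x^2\skpen\|_H^2$ and the already-known $H$-bound, bypassing Gronwall; this is a cosmetic variation rather than a different argument.
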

\begin{proof}
	We have 
	\begin{equation}
  \begin{aligned}
  	\norm{\skpen(t)}_V^2 &= \norm{\skpen(0)}^2_V + 2 \int_0^t \brackt{\skpen(s),\frac{\partial^2 \skpen(s)}{\partial x^2}}_Vds + 2 \int_0^t \brackt{\skpen(s),b(\skpen(s)}_Vds\\
  	&+2\sum_{k=1}^m\int_0^t \brackt{\skpen(s),\sigma_k\kuo{\skpen(s)}}_V\dot{h}(s)ds \\
  	&-2n\int_0^t\brackt{\skpen(s),\gamma\kuo{\skpen(s)}\abs{\skpen(s) - \pi\kuo{\skpen(s)}}}_Vds
  \end{aligned}
\end{equation}

Using the integration by parts formula, we deduce that 
$$
\int_0^t \brackt{\skpen(s),\frac{\partial \skpen(s)}{\partial x^2}}_Vds = -\int_0^t\norm{\skpen(s)}^2_{H^2}ds \leq 0
$$
and 
\begin{equation}
  \begin{aligned}
  	&\brackt{\skpen(s),\gamma(\skpen(s))\abs{\skpen(s) - \pi\kuo{\skpen(s)}}}_V \\
  	&= -\int_0^1\brackt{\frac{\partial^2\skpen(s,x)}{\partial x^2},\gamma\kuo{\skpen(s) - \pi\kuo{\skpen(s)}}}_d dx
  \end{aligned}
\end{equation}

From the boundedness of $\gamma(x)$, we have 
\begin{equation}
  \begin{aligned}
\sup_{0\leq t \leq T}\norm{\skpen(t)}^2_V + 2 \int_0^T \norm{\skpen(s)}^2_{H^2}ds &\leq C \norm{\skpen(0)} + C \int_0^T \brackt{\skpen(s),b(\skpen(s)}_Vds\\
& +C\sum_{k=1}^m\int_0^T \brackt{\skpen(s),\sigma_k\kuo{\skpen(s)}}_V\dot{h}(s)ds\\
&+C n \int_0^T\norm{\skpen(s)}_{H^2}\abs{\skpen(s)- \pi\kuo{\skpen(s)}}_Hds
  \end{aligned}
\end{equation}

\begin{equation}
  \begin{aligned}
  	\sum_{k=1}^M\int_0^T \brackt{\skpen(s),\sigma_k(\skpen(s)}_V \dot{h}(s)ds &\leq C \kuo{\sum_{k=1}^M\int_0^T \brackt{\skpen(s),\sigma_k(\skpen(s)}_V^2ds}^\frac{1}{2}\kuo{\int_0^T\dot{h}^2(s)ds}^{\frac{1}{2}}\\
  	&\leq C_1 \kuo{\sum_{k=1}^M\int_0^T \brackt{\skpen(s),\sigma_k(\skpen(s)}_V^2ds}^\frac{1}{2}\\
  	&\leq \frac{1}{4}\sup_{0\leq t \leq T}\norm{\skpen(t)}^2_V + C_2 \sum_{k=1}^m \int_0^T\norm{\sigma_k\kuo{\skpen(s)}}^2_Vds
  \end{aligned}
\end{equation}

Using the following elementary inequality 
$$
0\leq ab\leq na^2 +\frac{1}{n}b^2
$$
we obtain that 
\begin{equation}
\begin{aligned}
	&Cn \int_0^T\norm{\skpen(s)}_{H^2}\abs{\skpen(s)- \pi\kuo{\skpen(s)}}_Hds\\
	&\leq C n \int_0^T\kuo{\frac{1}{Cn}\norm{\skpen(s)}^2_{H^2}+cn \norm{\skpen(s) - \pi \kuo{\skpen(s)}}_H^2}ds\\
	& = \int_0^T\norm{\skpen(s)}_{H^2}ds + Cn^2 \int_0^T\norm{\skpen(s)-\pi\kuo{\skpen(s)}}^2_Hds
\end{aligned}
\end{equation}

By Lemma \ref{lem:3}, the second term is bounded form above by $C_T$. Thus in view of the linear growth conditions of $b$ and $\sigma$, we conclude from the above displays that 
\begin{equation}
\begin{aligned}
	  \sup_{0\leq t \leq T}\norm{\skpen(t)}^2_V + 2 \int_0^T\norm{\skpen(s)}^2_{H^2}&\leq C\norm{\skpen(0)}^2_V + C_T \\
  &+ \int_0^T\norm{\skpen(s)}^2_{H^2}ds + C \int_0^T\kuo{1+\norm{\skpen(s)}^2_V}ds
\end{aligned}
\end{equation}
which implies \eqref{eq:lem_4_1} and \eqref{eq:lem_4_2}.
\end{proof}
By Lemma \ref{lem:4}, we have the following corollary.
\begin{corollary}\label{cor:1}
$$
	\sup_n\sup_{h\in \mathcal{S}_N} \sup_{0\leq t\leq T}\norm{\skpen(t)-\pi\kuo{\skpen(t)}}^2_V\leq \infty
$$
\end{corollary}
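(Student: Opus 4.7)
My proposal is to reduce the bound on $\|u^{n,h}(t)-\pi(u^{n,h}(t))\|_V$ to the bound on $\|u^{n,h}(t)\|_V$ provided by Lemma~\ref{lem:4}, via the Lipschitz character of $I-\pi$ together with a Sobolev chain rule.

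First, I would record the basic fact that the map $\Phi\colon \mathbb{R}^d\to\mathbb{R}^d$, $\Phi(y):=y-\pi(y)$, is Lipschitz. Since $\mathcal{O}$ is convex, $\pi$ is $1$-Lipschitz, and the variational characterization of the projection gives
$$|\Phi(y_1)-\Phi(y_2)|^2=|y_1-y_2|^2-2\langle y_1-y_2,\pi(y_1)-\pi(y_2)\rangle+|\pi(y_1)-\pi(y_2)|^2\leq |y_1-y_2|^2,$$
so $\Phi$ is $1$-Lipschitz as well (at worst we could use the crude bound $\mathrm{Lip}(\Phi)\leq 2$; the precise constant is irrelevant for the conclusion).

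Next, since $0\in\mathcal{O}$ and the boundary conditions force $u^{n,h}(t,0)=u^{n,h}(t,1)=0\in\overline{\mathcal{O}}$, we have $\pi(u^{n,h}(t,0))=\pi(u^{n,h}(t,1))=0$, so the difference $u^{n,h}(t,\cdot)-\pi(u^{n,h}(t,\cdot))$ vanishes at the endpoints $x=0,1$ and is therefore an admissible element of the space $V=H^1_0$. By Lemma~\ref{lem:4}, $u^{n,h}(t,\cdot)\in V$ (in fact in $H^2$) uniformly in $n$, $h\in\mathcal{S}_N$ and $t\in[0,T]$, so I only need to control the weak spatial derivative of $\Phi(u^{n,h}(t,\cdot))$.

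The key step is the Sobolev chain rule for Lipschitz composition: if $v\in H^1(0,1;\mathbb{R}^d)$ and $\Phi$ is Lipschitz, then $\Phi\circ v\in H^1(0,1;\mathbb{R}^d)$ with
$$\Bigl\|\frac{\partial}{\partial x}\bigl(\Phi(v)\bigr)\Bigr\|_H\leq \mathrm{Lip}(\Phi)\,\Bigl\|\frac{\partial v}{\partial x}\Bigr\|_H.$$
Applying this pointwise in $t$ to $v=u^{n,h}(t,\cdot)$ yields
$$\bigl\|u^{n,h}(t)-\pi(u^{n,h}(t))\bigr\|_V\leq \mathrm{Lip}(\Phi)\,\|u^{n,h}(t)\|_V.$$
Taking suprema over $t\in[0,T]$, $h\in\mathcal{S}_N$ and $n$, and invoking \eqref{eq:lem_4_1}, gives the desired uniform bound.

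The only point requiring any care is the Sobolev chain rule for a not-everywhere-differentiable Lipschitz function $\Phi$; this is a classical result (Marcus--Mizel / Serrin) and does not require $\Phi\in C^1$. Once that is cited, the rest is immediate from Lemma~\ref{lem:4}.
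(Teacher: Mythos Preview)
Your proposal is correct and follows essentially the same approach as the paper: both arguments reduce to the uniform $V$-bound of Lemma~\ref{lem:4} via the fact that a $1$-Lipschitz map preserves the $V$-seminorm. The only cosmetic difference is that the paper applies the Lipschitz chain rule to $\pi$ and then uses the triangle inequality $\|u-\pi(u)\|_V\leq\|u\|_V+\|\pi(u)\|_V\leq 2\|u\|_V$, whereas you apply it directly to $\Phi=I-\pi$; you are also more careful than the paper in explicitly citing the Marcus--Mizel chain rule and in checking that the boundary values are preserved (since $0\in\mathcal{O}$ gives $\Phi(0)=0$), points which the paper glosses over with the phrase ``properties of the Sobolev space $V$''.
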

\begin{proof}
	Obversely, $\abs{\pi(x)-\pi(y)}\leq \abs{x-y}$ and the properties of the Sobolev space $V$, we deduce that $\brackt{\pi(u),\pi(u)}\leq \brackt{u,u}_V$. In addition, since 
	$$
	\norm{\skpen(t) -\pi\kuo{\skpen(t)}}_V\leq \norm{\skpen(t)}_V + \norm{\pi\kuo{\skpen(t)}}_V \leq 2 \norm{\skpen(t)}_V^2
	$$
then, by Lemma \ref{lem:4} we can get this corollary.
\end{proof}

\begin{lemma}\label{lem:5}
$$
	\lim\limits_{n\rightarrow \infty}\sup_{h\in \mathcal{S}_N}\sup_{0\leq t\leq T}\norm{\skpen(t) - \pi\kuo{\skpen(t)}}^2_H = 0
$$
\end{lemma}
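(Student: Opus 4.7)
The plan is to derive a sharp linear differential inequality for $t \mapsto \norm{v^{n,h}(t)}_H^2$, where $v^{n,h}(t,x) := \skpen(t,x) - \pi(\skpen(t,x))$, and then read off the explicit rate $\sup_t\norm{v^{n,h}(t)}_H^2 = O(1/n)$. The starting point is that, by convexity of $\overline{\mathcal{O}}$, the squared distance $y \mapsto |y - \pi(y)|^2$ is of class $C^{1,1}(\mathbb{R}^d)$ with gradient $2(y-\pi(y))$; in particular the map $y \mapsto y - \pi(y)$ has symmetric positive semidefinite Jacobian $Dv = I - D\pi$ with spectrum in $[0,1]$. The chain rule in $t$ combined with the PDE \eqref{eq:penalized} then gives
\begin{equation*}
\begin{aligned}
\tfrac{d}{dt}\norm{v^{n,h}(t)}_H^2 = &\, 2\brackt{v^{n,h}, \partial_{xx}\skpen} + 2\brackt{v^{n,h}, b(\skpen)} \\
& + 2\brackt{v^{n,h}, \sigma(\skpen)\dot{h}(t)} - 2n\brackt{v^{n,h}, \gamma(\skpen)|v^{n,h}|}.
\end{aligned}
\end{equation*}

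I would treat the four terms separately. Since $u(0)\in\overline{\mathcal{O}}$ and $\skpen(t,0) = \skpen(t,1) = 0 \in \overline{\mathcal{O}}$, one has $v^{n,h}(t,0)= v^{n,h}(t,1) = 0$ and $v^{n,h}(0,\cdot) = 0$; integration by parts and the PSD property of $Dv$ give
\begin{equation*}
\brackt{v^{n,h}, \partial_{xx}\skpen} = -\brackt{\partial_x v^{n,h}, \partial_x \skpen} = -\brackt{Dv(\skpen)\partial_x\skpen, \partial_x\skpen} \leq 0.
\end{equation*}
For the penalization term, the same pointwise nontangentiality inequality used in Lemma \ref{lem:1}, $\brackt{v^{n,h}(t,x), \gamma(\skpen(t,x))}_d \geq \rho|v^{n,h}(t,x)|$, yields $-2n\brackt{v^{n,h}, \gamma(\skpen)|v^{n,h}|} \leq -2n\rho\norm{v^{n,h}}_H^2$. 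For the drift and dispersion terms, Cauchy–Schwarz, Young's inequality with parameter $n\rho/2$, the linear growth of $b,\sigma$, and the uniform $H$-bound of Lemma \ref{lem:1} jointly give
\begin{equation*}
2\brackt{v^{n,h},b(\skpen)} + 2\brackt{v^{n,h},\sigma(\skpen)\dot{h}(t)} \leq n\rho\norm{v^{n,h}}_H^2 + \tfrac{C}{n}\bigl(1+|\dot{h}(t)|^2\bigr).
\end{equation*}

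Combining these four estimates, I obtain
\begin{equation*}
\tfrac{d}{dt}\norm{v^{n,h}(t)}_H^2 + n\rho \norm{v^{n,h}(t)}_H^2 \leq \tfrac{C}{n}\bigl(1+|\dot{h}(t)|^2\bigr),
\end{equation*}
and Gronwall's inequality (together with $v^{n,h}(0)=0$) yields
\begin{equation*}
\norm{v^{n,h}(t)}_H^2 \leq \tfrac{C}{n}\int_0^t e^{-n\rho(t-s)}\bigl(1+|\dot{h}(s)|^2\bigr)\,ds \leq \tfrac{C(1+N)}{n}
\end{equation*}
uniformly in $t\in[0,T]$ and $h\in \mathcal{S}_N$. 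Letting $n\to\infty$ proves the claim.

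The main obstacle I anticipate is the rigorous justification of the chain rule used above, since $\pi$ is only Lipschitz, $y \mapsto |y-\pi(y)|^2$ is only $C^{1,1}$, and $\partial_t \skpen$ a priori lives in a negative-order space rather than in $H$. I would circumvent this either by mollifying $y \mapsto |y-\pi(y)|^2$ and passing to the limit using the uniform $V$ and $H^2$ bounds from Lemma \ref{lem:4}, or by carrying out the computation at the Galerkin level (where the identity is classical) and then taking the Galerkin limit. A secondary, minor point is the pointwise inequality $\brackt{v^{n,h},\gamma(\skpen)}_d \geq \rho|v^{n,h}|$ for $y\notin\overline{\mathcal{O}}$ rather than just for $y\in\partial\mathcal{O}$; this is already tacitly used in Lemma \ref{lem:1} and follows from convexity of $\mathcal{O}$ (so that $y - \pi(y)$ is parallel to $n(\pi(y))$) combined with continuity of $\gamma$.
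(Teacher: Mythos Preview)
Your argument is correct and takes a genuinely different route from the paper. The paper works with the \emph{fourth} power $\psi(u)=\|u-\pi(u)\|_H^4$, expands it via the chain rule into terms $I_1,\dots,I_5$, discards $I_1$ and $I_4$ by convexity of the squared distance and the nontangentiality $\langle\gamma,n\rangle_d\ge\rho$, and bounds the remaining terms by $C\int_0^T\|u^{n,h}(s)\|_H^2\,\|u^{n,h}(s)-\pi(u^{n,h}(s))\|_H^2\,ds$; this integral is then controlled by $L/(n\rho)$ through the a~priori estimate~\eqref{eq:lem_1_2} of Lemma~\ref{lem:1}, giving the rate $O(n^{-1/2})$. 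You instead differentiate $\|v^{n,h}\|_H^2$ directly, use the same convexity (phrased as positive semidefiniteness of $I-D\pi$) and nontangentiality for the dissipative terms, but then absorb the drift and dispersion into the penalization via Young's inequality with the $n$-dependent weight $n\rho/2$. This avoids any appeal to~\eqref{eq:lem_1_2}, needs only the uniform $H$-bound~\eqref{eq:lem_1_1}, and yields the sharper rate $O(n^{-1})$. The paper's fourth-power device is the classical one from the reflected-SDE literature and is recycled verbatim for the stochastic analogue in Lemma~\ref{lem:8}; your argument is cleaner here but would require the obvious It\^o-formula adaptation to transfer to that setting. Your caveats about the $C^{1,1}$ chain rule and the extension of the pointwise inequality $\langle v,\gamma(u)\rangle_d\ge\rho|v|$ off $\partial\mathcal{O}$ are well placed; the paper treats both at the same informal level.
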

\begin{proof}
	Let $q(z) = dist(z,\bar{\mathcal{O}})$, define $\psi(u) = \kuo{\int_0^1q\kuo{u(x)}ds}^2$, i.e. 
	$$
	\psi(u) = \norm{u - \pi(u)}^4_H\quad \text{for }u \in H 
	$$
	Then, the first Frechet derivative $\psi^\prime$ at $u$ is given as follow: for $h \in H$
	$$
	\psi^\prime(u)(h) = 4 \kuo{\int_0^1 q\kuo{u(x)}dx}\brackt{u-\pi(u),h}
	$$
	
	We have

\begin{equation}
\begin{aligned}
\psi(\skpen(t)) &=2 \int_0^t d s\left(\int_0^1 q\left(\skpen(s, x)\right) d x\right) \int_0^1 \sum_{i=1}^d \frac{\partial q}{\partial z_i}\left(\skpen(s, x)\right) \frac{\partial^2 \skpen_i(s, x)}{\partial x^2} d x \\
&+4 \sum_{k=1}^m \int_0^t\left(\int_0^1 q\left(\skpen(s, x)\right) d x\right)\left\langle \skpen(s)-\pi\left(\skpen(s)\right), \sigma_k\left(\skpen(s)\right)\right\rangle\dot{h}(s) d s \\
&+4 \int_0^t\left(\int_0^1 q\left(\skpen(s, x)\right) d x\right)\left\langle \skpen(s)-\pi\left(\skpen(s)\right), b\left(\skpen(s)\right)\right\rangle d s \\
&-4 n \int_0^t\left(\int_0^1 q\left(\skpen(s, x)\right) d x\right)\left\langle \skpen(s)-\pi\left(\skpen(s)\right), \gamma\left(\skpen(s)\right)\left|\skpen(s)-\pi\left(\skpen(s)\right)\right|\right\rangle d s \\
&+4 \sum_{k=1}^m \int_0^t\left\langle \skpen(s)-\pi\left(\skpen(s)\right), \sigma_k\left(\skpen(s)\right)\right\rangle^2 d s+\sum_{k=1}^m \int_0^t d s\left(\int_0^1 q\left(\skpen(s, x)\right) d x\right)\\
& = I_1 +I_2 +I_3 +I_4 + I_5
\end{aligned}
\end{equation}

In view of $\brackt{\gamma(u),n(u)}_d \geq \rho > 0$, it yields 
\begin{equation}
\begin{aligned}
&\left\langle \skpen(s)-\pi\left(\skpen(s)\right), \gamma\left(\skpen(s)\right)\left|\skpen(s)-\pi\left(\skpen(s)\right)\right|\right\rangle \\
&=\int_0^1\left|\skpen(s, x)-\pi\left(\skpen(s, x)\right)\right|\left\langle \skpen(s, x)-\pi\left(\skpen(s, x)\right), \gamma\left(\skpen(s, x)\right)\right\rangle_d d x \\
&\geq \rho\norm{\skpen(s)-\pi\left(\skpen(s)\right)}_H^2 \geq 0,
\end{aligned}
\end{equation}
which implies $I_4\leq 0$. Since $\mathcal{O}$ is convex and the function $q(z)$ is also convex on $\mathbb{R}^d\backslash\mathcal{O}$, the matrix$\left(\frac{\partial^2 q}{\partial z_i \partial z_j}\right)_{1 \leq i, j \leq d}$ is positive semi-definite on this domain. Then by the integration by parts formula, we have
$$
\begin{aligned}
I_1^n(t) &=-2 \int_0^t d s\left(\psi\left(\skpen(s)\right)\right)^{\frac{1}{2}} \int_0^1 \sum_{i, j=1}^d \frac{\partial^2 q}{\partial z_i \partial z_j}\left(\skpen(s, x)\right) \frac{\partial \skpen_i(s, x)}{\partial x} \frac{\partial \skpen_j(s, x)}{\partial x} d x \\
& \leq 0
\end{aligned}
$$

\begin{equation}
  \begin{aligned}
  	\sup_{h\in \mathcal{S}_N}\sup_{0\leq t \leq T}I_2 &\leq \sum_{k = 1}^m \int_0^T \psi\kuo{\skpen(s)}\brackt{\skpen(s)-\pi\left(\skpen(s)\right), \sigma_k\left(\skpen(s)\right)}^2ds\kuo{\int_0^T \dot{h}(s)^2ds}^{\frac{1}{2}}\\
  	& \leq C\sum_{k = 1}^m\int_0^T \psi\kuo{\skpen(s)}\brackt{\skpen(s)-\pi\left(\skpen(s)\right), \sigma_k\left(\skpen(s)\right)}^2ds\\
  	& \leq \frac{1}{4}\sup_{0\leq t\leq T}\psi\kuo{\skpen(t)}+ C_1 \kuo{\int_0^T \norm{\skpen(s)}^2_H\norm{\skpen(s)-\pi\kuo{\skpen(s)}}^2_Hds}
  \end{aligned}
\end{equation}
\begin{equation}
  \begin{aligned}
  	|I_3|&\leq \int_0^T\psi\kuo{\skpen(s)}^\frac{1}{2}\brackt{\skpen(s)-\pi\kuo{\skpen(s)},b\kuo{\skpen(s)}}ds\\
  	&\leq \frac{1}{4}\sup_{0\leq t \leq T}\psi\kuo{\skpen(t)} + C \kuo{\int_0^T \norm{\skpen(s)}_H\norm{\skpen(s)-\pi\kuo{\skpen(s)}}_Hds}^2\\
  	&\leq \frac{1}{4}\sup_{0\leq t \leq T}\psi\kuo{\skpen(t)}+ C_2\int_0^T\norm{\skpen(s)}^2_H\norm{\skpen(s) - \pi\kuo{\skpen(s)}}^2_Hds
  \end{aligned}
\end{equation}

And
\begin{equation}
  \begin{aligned}
  	&n\int_0^Tdt\norm{\skpen(t)}^2_H\int_0^1\brackt{\skpen(t,x),\gamma\kuo{\skpen(t,x)}}_d\abs{\skpen(t,x)-\pi\kuo{\skpen(t,x)}}dx\\
  	& = n \int_0^Tdt\norm{\skpen(t)}^2_H\int_0^1\brackt{\skpen(t,x)-\pi\kuo{\skpen(t,x)},\gamma\kuo{\skpen(t,x)}}_d\abs{\skpen(t,x)-\pi\kuo{\skpen(t,x)}}dx\\
  	&+n\int_0^Tdt\norm{\skpen(t)}^2_H\int_0^1\brackt{\pi\kuo{\skpen(t,x)},\gamma\kuo{\skpen(t,x)}}_d\abs{\skpen(t,x)-\pi\kuo{\skpen(t,x)}}dx\\
  	&\geq n \rho \int_0^T\norm{\skpen(t)}^2_H\norm{\skpen(t)-\pi\kuo{\skpen(t)}}^2_Hdt
  \end{aligned}
\end{equation}

Finally, we can get 
\begin{equation}
  \begin{aligned}
  	&\sup_{h\in \mathcal{S}_N}\sup_{0\leq t \leq T}\norm{\skpen(t)-\pi\kuo{\skpen(t)}}^2_H = \sup_{0\leq t \leq T}\psi\kuo{\skpen(t)}^\frac{1}{2}\leq \kuo{\sup_{0\leq t \leq T}\psi\kuo{\skpen(t)}}^{\frac{1}{2}}\\
  	&\leq C \kuo{\int_0^T\norm{\skpen(s)}^2_H\norm{\skpen(s)-\pi\kuo{\skpen(s)}}^2_Hds}^{\frac{1}{2}}\leq C \kuo{\frac{L}{n\rho}}^{\frac{1}{2}}\rightarrow 0 \quad \text{as }n \rightarrow \infty
  \end{aligned}
\end{equation}

\end{proof}
\begin{corollary}\label{cor:2}
$$
	\lim\limits_{n\rightarrow \infty} \sup_{h
	\in \mathcal{S}_N}\sup_{0\leq t \leq T} \norm{\skpen(t)- \pi\kuo{\skpen(t)}}^2_{L^\infty([0,1])} = 0
$$
\end{corollary}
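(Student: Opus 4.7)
The plan is to use a one-dimensional Sobolev interpolation of Gagliardo–Nirenberg type, which bounds the $L^\infty$-norm by the geometric mean of the $H$- and $V$-norms, and then combine the $H$-convergence from Lemma \ref{lem:5} with the uniform $V$-bound from Corollary \ref{cor:1}.

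First I would observe that the function $v^{n,h}(t,\cdot) := u^{n,h}(t,\cdot) - \pi(u^{n,h}(t,\cdot))$ vanishes at the endpoints $x=0$ and $x=1$. Indeed, $u^{n,h}(t,0)=u^{n,h}(t,1)=0$ by the Dirichlet boundary conditions of the penalized system \eqref{eq:penalized}, and since we have assumed $0\in\mathcal{O}\subset\overline{\mathcal{O}}$, one has $\pi(0)=0$. For any such $f\in V$ vanishing at both endpoints, writing $f(x)^2 = 2\int_0^x f(y)f'(y)\,dy$ and applying Cauchy–Schwarz gives the pointwise bound $|f(x)|^2\leq 2\|f\|_H\|f\|_V$, hence
$$
\|f\|_{L^\infty([0,1])}^2 \;\leq\; 2\,\|f\|_H\,\|f\|_V.
$$

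Applying this to $v^{n,h}(t,\cdot)$, taking the supremum in $t\in[0,T]$ and $h\in\mathcal{S}_N$, and using the elementary inequality $\sup(ab)\leq(\sup a)(\sup b)$, I obtain
$$
\sup_{h\in\mathcal{S}_N}\sup_{0\leq t\leq T}\|v^{n,h}(t)\|_{L^\infty}^2 \;\leq\; 2\Big(\sup_{h\in\mathcal{S}_N}\sup_{0\leq t\leq T}\|v^{n,h}(t)\|_H\Big)\Big(\sup_{h\in\mathcal{S}_N}\sup_{0\leq t\leq T}\|v^{n,h}(t)\|_V\Big).
$$
By Corollary \ref{cor:1}, the second factor is bounded uniformly in $n$, while by Lemma \ref{lem:5}, the first factor tends to $0$ as $n\to\infty$ (after taking square roots of the statement there). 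Multiplying a bounded quantity by one tending to zero yields the claim.

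The argument is essentially routine once the right interpolation inequality is identified; there is no real obstacle. The only slightly delicate point is verifying that the boundary conditions needed for the interpolation inequality (vanishing at both endpoints) are inherited by $u^{n,h}-\pi(u^{n,h})$, which is precisely why the assumption $0\in\mathcal{O}$ from Section \ref{sec:2} is used.
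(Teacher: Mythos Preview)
Your proof is correct and follows essentially the same approach as the paper: both arguments pass from the $H$-convergence of Lemma~\ref{lem:5} and the uniform $V$-bound of Corollary~\ref{cor:1} to the $L^\infty$-statement via a one-dimensional Sobolev interpolation. The only cosmetic difference is that the paper uses the additive form $\|v\|_{L^\infty}^2 \leq \varepsilon\|v\|_V^2 + C_\varepsilon\|v\|_H^2$ and then sends $\varepsilon\to 0$, whereas you use the equivalent multiplicative Gagliardo--Nirenberg form $\|v\|_{L^\infty}^2 \leq 2\|v\|_H\|v\|_V$ directly; your route is arguably a touch cleaner since it avoids the two-step $n\to\infty$, $\varepsilon\to 0$ limit.
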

\begin{proof}
	By the Sobolev embedding, for every $\varepsilon >0$, there exists a constant $C_\varepsilon$ such that
	\begin{equation}
  \begin{aligned}
   \sup_{h \in \mathcal{S}_N}\sup_{0\leq t \leq T} \norm{\skpen(t)- \pi\kuo{\skpen(t)}}^2_{L^\infty([0,1])}&\leq \varepsilon \sup_{h \in \mathcal{S}_N}\sup_{0\leq t \leq T} \norm{\skpen(t)- \pi\kuo{\skpen(t)}}^2_{V}\\
   &+ C_\varepsilon \sup_{h \in \mathcal{S}_N}\sup_{0\leq t \leq T} \norm{\skpen(t)- \pi\kuo{\skpen(t)}}^2_{H}
  \end{aligned}
\end{equation}
Letting $n\rightarrow \infty$, it follows Lemma \ref{lem:5} and Corollary \ref{cor:1} that 
$$
\lim\limits_{n\rightarrow \infty} \sup_{h
	\in \mathcal{S}_N}\sup_{0\leq t \leq T} \norm{\skpen(t)- \pi\kuo{\skpen(t)}}^2_{L^\infty([0,1])}\leq C\varepsilon
$$
Send $\varepsilon$ to 0 to prove the corollary.
\end{proof}
\begin{lemma}\label{lem:6}
For any $T>0$, $N>0$ and $n\geq m$, 
	\begin{equation}
\lim\limits_{n,m\rightarrow \infty} \sup_{h\in \mathcal{S}_N}\sup_{0\leq t\leq T}\norm{\skpen(t) - u^{m,h}(t)}^2_H +2 \sup_{h\in \mathcal{S}_N}\int_0^T\norm{\skpen(t)-u^{m,h}(t)}^2_Vdt = 0 
\end{equation}

\end{lemma}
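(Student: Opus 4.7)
The strategy is a Cauchy-in-$(n,m)$ argument. I subtract the penalized equations \eqref{eq:penalized} at indices $n$ and $m$ and apply the chain rule to $\norm{u^{n,h}(t)-u^{m,h}(t)}_H^2$. The Laplace term produces the good negative quantity $-2\int_0^t\norm{u^{n,h}(s)-u^{m,h}(s)}_V^2 ds$, which carries over to the left-hand side and delivers the $L^2([0,T];V)$ part of the conclusion. The drift contribution is absorbed via the Lipschitz assumption \textbf{A.3}, producing $C\int_0^t\norm{u^{n,h}-u^{m,h}}_H^2 ds$. The $\dot h$-driven term, after Cauchy--Schwarz and Young's inequality, yields $\int_0^t \norm{u^{n,h}-u^{m,h}}_H^2 (1+|\dot h(s)|^2) ds$, which is compatible with Gronwall since $\int_0^T|\dot h|^2 ds\le N$.

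The essential task is to control the penalty remainder
\[
R_{n,m}(t) = -2\int_0^t \brackt{u^{n,h}-u^{m,h},\ n\gamma(u^{n,h})\abs{u^{n,h}-\pi(u^{n,h})} - m\gamma(u^{m,h})\abs{u^{m,h}-\pi(u^{m,h})}}\, ds.
\]
I use the decomposition $u^{n,h}-u^{m,h}=(u^{n,h}-\pi(u^{n,h}))-(u^{m,h}-\pi(u^{m,h}))+(\pi(u^{n,h})-\pi(u^{m,h}))$, applied symmetrically to both penalty terms. The matched pieces $-2n\brackt{u^{n,h}-\pi(u^{n,h}),\gamma(u^{n,h})\abs{u^{n,h}-\pi(u^{n,h})}}$ and its $m$-analogue are nonpositive by the interior cone condition \textbf{A.2}(B) combined with $u^{n,h}-\pi(u^{n,h})=\abs{u^{n,h}-\pi(u^{n,h})}\,n(\pi(u^{n,h}))$. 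The mixed pieces of type $-2n\brackt{u^{m,h}-\pi(u^{m,h}),\gamma(u^{n,h})\abs{u^{n,h}-\pi(u^{n,h})}}$ are bounded by $C\sup_{s\le T}\norm{u^{m,h}(s)-\pi(u^{m,h}(s))}_{L^\infty}\cdot n\int_0^T\norm{u^{n,h}-\pi(u^{n,h})}_{L^1}\,ds$, which tends to $0$ uniformly in $h\in\mathcal{S}_N$ by Corollary \ref{cor:2} combined with Lemma \ref{lem:2}.

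The genuinely hard pieces are of the form $-2n\int_0^t\brackt{\pi(u^{n,h})-\pi(u^{m,h}),\gamma(u^{n,h})\abs{u^{n,h}-\pi(u^{n,h})}}\,ds$. Here the plan is to invoke Lemma 2.1: express the oblique direction $\gamma(u^{n,h})$ via the symmetric matrix $(a_{ij}(\pi(u^{n,h})))$, which converts it into the outward normal at $\pi(u^{n,h})\in\partial\mathcal{O}$, modulo a smooth error of order $\abs{u^{n,h}-\pi(u^{n,h})}$ coming from the $C^2_b$-regularity of $\gamma$. One then applies the convex exterior sphere inequality $\brackt{\pi(u^{n,h})-\pi(u^{m,h}),n(\pi(u^{n,h}))}\ge -C_0\abs{\pi(u^{n,h})-\pi(u^{m,h})}^2$. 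The resulting quadratic error is bounded by $C\norm{u^{n,h}-u^{m,h}}_H^2$ (since $\pi$ is a contraction) and is absorbed by Gronwall; the residual involving $\abs{u^{n,h}-\pi(u^{n,h})}$ again vanishes by Lemma \ref{lem:2} and Corollary \ref{cor:2}.

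Putting all estimates together, I arrive at an inequality of the form
\[
\norm{u^{n,h}(t)-u^{m,h}(t)}_H^2 + 2\int_0^t \norm{u^{n,h}(s)-u^{m,h}(s)}_V^2 ds \le C\int_0^t \norm{u^{n,h}-u^{m,h}}_H^2(1+|\dot h(s)|^2)ds + \epsilon(n,m),
\]
with $\epsilon(n,m)\to 0$ uniformly in $h\in\mathcal{S}_N$. Gronwall's inequality then delivers the uniform bound on the $H$-norm, reinjected into the $V$-estimate gives the $L^2([0,T];V)$ part, and taking $\sup_{h\in\mathcal{S}_N}$ and letting $n,m\to\infty$ yields the lemma. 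The main obstacle is the projection-difference cross term: here one genuinely needs Lemma 2.1 to trade the oblique direction $\gamma$ for the outward normal, because only after this substitution does the convex exterior sphere condition of $\mathcal{O}$ yield an inequality with a favourable sign.
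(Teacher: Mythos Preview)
Your overall architecture (chain rule on $\|u^{n,h}-u^{m,h}\|_H^2$, Laplacian to the left, drift and $\dot h$ terms into Gronwall) matches the paper. The divergence is entirely in the treatment of the penalty cross terms, and there your argument has a genuine gap.

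For the ``hard piece'' $-2n\int_0^t\langle \pi(u^{n,h})-\pi(u^{m,h}),\gamma(u^{n,h})\rangle\,|u^{n,h}-\pi(u^{n,h})|\,ds$ you propose to invoke Lemma~2.1 to trade $\gamma$ for the outward normal $n$ and then use the exterior--sphere inequality. But Lemma~2.1 states $a(x)\gamma(x)=n(x)$ on $\partial\mathcal{O}$, i.e.\ $\gamma=a^{-1}n$; hence for a vector $v$ one has $\langle v,\gamma\rangle=\langle a^{-1}v,n\rangle$, \emph{not} $\langle v,n\rangle$. With $v=\pi(u^{n,h})-\pi(u^{m,h})$ the transformed vector $a^{-1}v$ is not of the form $\pi(u^{n,h})-y$ with $y\in\overline{\mathcal{O}}$, so neither convexity nor the exterior--sphere inequality yields a sign. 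Equivalently, the inequality in Lemma~2.1 reads $\langle a(x)(x-y),\gamma(x)\rangle\ge -C_0|x-y|^2$, which is a bound on $\langle a(x)(x-y),\gamma(x)\rangle$, not on $\langle x-y,\gamma(x)\rangle$. This is precisely why oblique problems force one to work with the $a$--weighted quadratic form (as in the uniqueness proof of Theorem~\ref{thm:1}), and why a direct sign argument on $\|u^{n,h}-u^{m,h}\|_H^2$ fails for this piece.

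The paper avoids this obstruction by a different route: it does \emph{not} seek a sign from the penalty difference at all. It uses the two--term splitting $u^{n,h}-u^{m,h}=(u^{n,h}-\pi(u^{m,h}))-(u^{m,h}-\pi(u^{m,h}))$ and then bounds $I_4^{n,m}$ in absolute value, the key input being Lemma~\ref{lem:3}, namely $\sup_n\sup_{h\in\mathcal{S}_N} n^2\int_0^T\|u^{n,h}-\pi(u^{n,h})\|_H^2\,dt\le C_T$. Combined with Lemma~\ref{lem:2} and Corollary~\ref{cor:2}, this controls every cross term by either a Gronwall--absorbable quantity $C\int_0^t\|u^{n,h}-u^{m,h}\|_H^2\,ds$ or a residual that vanishes uniformly in $h\in\mathcal{S}_N$ as $n,m\to\infty$. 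You never invoke Lemma~\ref{lem:3}; without it (or an $a$--weighted functional replacing $\|\cdot\|_H^2$) the hard piece cannot be closed.
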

\begin{proof}
	For $n\geq m$, we have 
	\begin{equation}
  \begin{aligned}
  	\norm{\skpen(t) - u^{m,h}(t)}^2_H &= 2\int_0^t \brackt{\skpen(s) - u^{m,h}(s),\frac{\partial^2\kuo{\skpen(s)-u^{m,h}(s)}}{\partial x^2}}ds\\
  	& +2\sum_{k=1}^m\int_0^t \brackt{\skpen(s) - u^{m,h}(s),\sigma_k\kuo{\skpen(s)}-\sigma_k\kuo{u^{m,h}(s)}}\dot{h}(s)ds\\
  	&+2 \int_0^t\brackt{\skpen(s)-u^{m,h}(s),b\kuo{\skpen(s)}-b\kuo{u^{m,h}(s)}}ds\\
  	& -2n\int_0^t\brackt{\skpen(s)-u^{m,h}(s),\gamma\kuo{\skpen(s)}\abs{\skpen(s) - \pi\kuo{\skpen(s)}}}ds\\
  	&+2m\int_0^t\brackt{\skpen(s)-u^{m,h}(s),\gamma\kuo{u^{m,h}(s)}\abs{u^{m,h}(s)-\pi\kuo{u^{m,h}(s)}}}ds\\
  	& = I_1^{m,n}+I_2^{m,n} + I_3^{m,n}+I_4^{m,n}+I_5^{m,n}
  \end{aligned}
\end{equation}

Using the integration by parts formula, we deduce that
\begin{equation}
  \brackt{\skpen(s)-u^{m,h}(s),\frac{\partial^2\kuo{\skpen(s)-u^{m,h}(s)}}{\partial x^2}} = - \norm{\skpen(s) - u^{m,h}(s)}^2_V\leq 0
\end{equation}

By Holder's inequality, we have 
\begin{equation}
  \begin{aligned}
  	&2n\int_0^t\brackt{\skpen(s) - \pi\kuo{u^{m,h}(s)},\gamma\kuo{\skpen(s)}\abs{\skpen(s) - \pi\kuo{\skpen(s)}}}ds\\
  	&\geq -Cn\int_0^t\norm{\skpen(s)-u^{m,h}(s)}_H \norm{\skpen(s) - \pi\kuo{\skpen(s)}}_Hds\\
  	& - Cn \int_0^t \norm{u^{m,h}(s)- \pi\kuo{u^{m,h}(s)}}_H\norm{\skpen(s)-\pi\kuo{\skpen(s)}}_Hds\\
  	&\geq -C\kuo{\int_0^t\norm{\skpen(s) - u^{m,h}(s)}^2_Hds}^{\frac{1}{2}}\kuo{n^2\int_0^t\norm{\skpen(s)-\pi\kuo{\skpen(s)}}^2_Hds}^{\frac{1}{2}}\\
  	&-C \sup_{0\leq s\leq t}\norm{u^{m,h}(s)-\pi\kuo{u^{m,h}}}_H\kuo{n \int_0^t\norm{\skpen(s)-\pi\kuo{\skpen(s)}}_Hds}
  \end{aligned}
\end{equation}

Consequently,
\begin{equation}
\begin{aligned}
I_4^{n, m}=&-2 n \int_0^t\left\langle \skpen(s)-\pi\left(u^{m,h}(s)\right), \gamma\left(\skpen(s)\right)\left|\skpen(s)-\pi\left(\skpen(s)\right)\right|\right\rangle d s \\
&+2 n \int_0^t\left\langle u^{m,h}(s)-\pi\left(u^{m,h}(s)\right), \gamma\left(\skpen(s)\right)\left|\skpen(s)-\pi\left(\skpen(s)\right)\right|\right\rangle d s \\
\leq & C\left(\int_0^t\left\|\skpen(s)-u^{m,h}(s)\right\|_H^2 d s\right)^{\frac{1}{2}}\left(n^2 \int_0^t\left\|\skpen(s)-\pi\left(\skpen(s)\right)\right\|_H^2 d s\right)^{\frac{1}{2}} \\
&+C \sup _{0 \leq s \leq t}\left\|u^{m,h}(s)-\pi\left(u^{m,h}(s)\right)\right\|_H \times\left(n \int_0^t\left\|\skpen(s)-\pi\left(\skpen(s)\right)\right\|_H d s\right)\\
&+\sup _{0 \leq s \leq t}\left\|u^{m,h}(s)-\pi\left(u^m(s)\right)\right\|_{L^{\infty}([0,1])} \times\left(2 n \int_0^t\left\|\skpen(s)-\pi\left(\skpen(s)\right)\right\|_{L^1([0,1])} d s\right)
\end{aligned}
\end{equation}

By a similar argument as above, the same estimate hods for $I^{m,n}_5$.

Combing the estimates for $I^{m,n}_4$ and $I^{m,n}_5$ and apply Lemma \ref{lem:2}, we have 
\begin{equation}
  \begin{aligned}
	&\sup_{0\leq t \leq T}\norm{\skpen(t)-u^{m,h}(t)}^2_H + 2 \int_0^T\norm{\skpen(t)- u^{m,h}(t)}^2_Vdt\\
	&\leq  
	\frac{1}{4}\sup_{0\leq t\leq T}\norm{\skpen(t) - u^{m,h}(t)}^2_H + C \int_0^T\norm{\skpen(t)-u^{m,h}(t)}^2_Hdt \\
	&+\left\{ C\kuo{C_T}^{\frac{1}{2}}\int_0^T\norm{\skpen(t)-u^{m,h}(t)}^2_Hdt+\sup_{0\leq t\leq T}\norm{\skpen(t)-\pi\kuo{u^{m,h}(t)}}_H^2\right. \\
	& \left. + \sup_{0\leq t\leq T}\norm{\skpen(t)-\pi\kuo{\skpen(t)}}^2_H\right\}\\
	&+ C\kuo{\sup_{0\leq t\leq T}\norm{u^{m,h}(t) - \pi\kuo{u^{m,h}(t)}}_{L^\infty([0,1])}}\times 2n\int_0^T\norm{\skpen(t)-\pi\kuo{\skpen(t)}}_{L^1([0,1])}dt \\
	&+C \kuo{\sup_{0\leq t\leq T}\norm{\skpen(t) - \pi\kuo{\skpen(t)}}_{L^\infty([0,1])}}\times 2n\int_0^T\norm{u^{m,h}(t)-\pi\kuo{u^{m,h}(t)}}_{L^1([0,1])}dt
\end{aligned}
\end{equation}

By Gronwall's inequality and Lemma \ref{lem:1} and \ref{lem:5}, we have 
\begin{equation}
  \begin{aligned}
  	&\sup_{h\in\mathcal{S}_N}\sup_{0\leq t\leq T}\norm{\skpen(t)-u^{m,h}(t)}^2_H + 2 \int_0^T\norm{\skpen(t)-u^{m,h}(t)}^2_Vdt\\
  	&\leq C\kuo{M_T}^\frac{1}{2}\sup_{0\leq t\leq T}\norm{u^{m,h}(t)-\pi\kuo{u^{m,h}(t)}}_{L^\infty([0,1])} \\
  	&+C\kuo{M_T}^\frac{1}{2}\sup_{0\leq t\leq T}\norm{\skpen(t)-\pi\kuo{\skpen(t)}}_{L^\infty([0,1])}
  \end{aligned}
\end{equation}

By corollary \ref{cor:2}, we have 
$$
\lim\limits_{n,m\rightarrow \infty} \sup_{h\in \mathcal{S}_N}\sup_{0\leq t\leq T}\norm{\skpen(t) - u^{m,h}(t)}^2_H +2 \sup_{h\in \mathcal{S}_N}\int_0^T\norm{\skpen(t)-u^{m,h}(t)}^2_Vdt = 0 
$$
\end{proof}
\begin{proof}[\textbf{Proof of Theorem \ref{thm:1}}] From Lemma \ref{lem:6}, there exists $u^h$ such that for any $T>0$, $u^h \in C\kuo{[0,T];H}\cap L^2\kuo{[0,T];V}$. We will show 
	$u^h$ is the solution of \eqref{eq:ass_gam}.
	
	From Lemma \ref{lem:5}, it follows that 
	$$
	\sup_{h\in \mathcal{S}_N}\sup_{0\leq t\leq T}\norm{u^h(t) - \pi(u^h(t))}^2_H \leq \lim\limits_{n\rightarrow \infty}\sup_{h\in \mathcal{S}_N}\sup_{0\leq t \leq T}\norm{\skpen(t)-\pi\kuo{\skpen(t)}}^2_H =0
	$$
	This means that for any $t >0$, we have $dist(u^h(t,x),\bar{\mathcal{O}}) = 0$ for almost all $x\in [0,1]$. Letting 
	$$
	\eta^{n,h}(dt,dx) = n\gamma\kuo{\skpen(t,x)}\abs{\skpen - \pi \kuo{\skpen(t,x)}}dtdx
	$$
	From every $T>0$, by Lemma \ref{lem:1}, we have 
	\begin{equation}\label{eq:bound_eta_h}
  \sup_n\sup_{h\in \mathcal{S}_N} Var\kuo{\eta^{n,h}}\kuo{[0,T]\times [0,1]}^2\leq \sup_n\sup_{h\in \mathcal{S}_N}\kuo{n\int_0^T\norm{\skpen(t) - \pi\kuo{\skpen(t)}}_{L^1([0,1])}dt}^2\leq \infty
\end{equation}
where $Var\kuo{\eta^{n,h}}\kuo{[0,T]\times [0,1]}$ denotes the total variation of $\eta^{n,h}$ on $Q_T=[0,T]\times [0,1]$. Let $\mathcal{M}\kuo{Q_T}$ be the Banach space of measures on $Q_T$ with the norm of total variation. It follows from \eqref{eq:bound_eta_h} that $\dkuo{\eta^{h,n}(dt,dx)}$ is bounded in $L^2\kuo{\mathcal{M}\kuo{Q_T}}$. Since $\mathcal{M}\kuo{Q_T}$ can be identified with the dual of $C\kuo{Q_T}$, $\eta^{h,n}$ is converges to an element $\eta^h \in L^2\kuo{\Omega,\mathcal{M}\kuo{Q_T}}$ with respect to the weak-*-topology. From \eqref{eq:bound_eta_h}, we also have $Var\kuo{\eta^h\kuo{[0,T]\times[0,1]}}<\infty$. 

Taking any $\varphi \in C^2_0\kuo{(0,\infty)\times [0,1];\mathbb{R}^d}$, by chain rule we have 
\begin{equation}
  \begin{aligned}
  	&\brackt{\skpen(t),\varphi(t)} - \int_0^t \brackt{\skpen(s),\frac{\partial \varphi(s)}{\partial x}}ds - \int_0^t\brackt{\skpen(s),\varphi^{\prime\prime}(s)}ds\\
  	&=\brackt{u(0),\varphi(0)}+\sum_{k=1}^m\int_0^t\brackt{\sigma_k(\skpen(s)),\varphi(s)}\dot{h}(s)ds \\
  	&+\int_0^t\brackt{b\kuo{\skpen(s),\varphi(s)}}ds-\int_0^t\int_0^1\varphi(s,x)\eta^{n,h}(ds,dx)
  \end{aligned}
\end{equation}
All the terms on the right hand side of the above identity converges. As $n\rightarrow \infty$, we have 
\begin{equation}
  \begin{aligned}
  	&\brackt{u^h(t),\varphi(t)} - \int_0^t \brackt{u^h(s),\frac{\partial \varphi(s)}{\partial x}}ds - \int_0^t\brackt{u^h(s),\varphi^{\prime\prime}(s)}ds\\
  	&=\brackt{u(0),\varphi(0)}+\sum_{k=1}^m\int_0^t\brackt{\sigma_k(u^h(s)),\varphi(s)}\dot{h}(s)ds \\
  	&+\int_0^t\brackt{b\kuo{u^h(s),\varphi(s)}}ds-\int_0^t\int_0^1\varphi(s,x)\eta^{h}(ds,dx)
  \end{aligned}
\end{equation}

For any $\phi \in C([0, T] \times[0,1] ;\overline{\mathcal{O}})$, we obtain that 
$$\left\langle \skpen(t, x)-\phi(t, x), \skpen(t, x)-\pi\left(\skpen(t, x)\right)\right\rangle_d \geq 0.$$
 Since $\sum_{j=1}^d a_{i j}(x) \gamma_j(x)=n_i(x)$ for any $x \in \partial \mathcal{O}$, we have 
$$\left(a_{i j}\left(\skpen\right)\right) \eta^{n,h}(d t, d x)=\left(a_{i j}\left(\skpen\right)\right) \skpen(t, x)-\pi\left(\skpen(t, x)\right) d t d x $$
and then
$$
\left\langle \skpen(t, x)-\phi(t, x),\left(a_{i j}\left(\skpen\right)\right) \eta^{n,h}(d t, d x)\right\rangle_d \geq 0 .
$$
Letting $n \rightarrow \infty$, it yields
$$
\int_0^T \int_0^1\left\langle u^h(t, x)-\phi(t, x),\left(a_{i j}(u)\right) \eta^h(d t, d x)\right\rangle_d \geq 0,
$$
by the strong convergence of $\skpen$ in $L^2\left(\Omega, C\left(Q_T\right)\right)$ combined with the Sobolev embedding. Therefore, we conclude from above displays that $(u^h, \eta^h)$ is a solution to \eqref{eq:skeleton}.

Let $\kuo{v^h,\eta^h_2(dt,dx)}$ be another solution to the skeleton equation \eqref{eq:skeleton} such that $\sup_{0\leq t\leq T}\norm{v^h(t)}^2_H\leq\infty$ for any $T>0$. There exists a function $\Phi \in C^2_b\kuo{\mathbb{R}^d}$ such that
\begin{equation}
\exists \alpha>0, \quad \forall u \in \partial \mathcal{O}, \quad\langle\nabla \Phi(u), \gamma(u)\rangle_d \leq-\alpha C_0 \leq 0.
\end{equation}

Define
%%%%
\vspace{-0.3cm}
$$
\phi(u^h(t)):=\int_0^1 \Phi(u^h(t, x)) d x
$$
By chain role, we have 
\begin{equation}
  \begin{aligned}
  	\phi\kuo{u^h(t)} =& \phi\kuo{u^h(0)}+\int_0^t\brackt{\nabla \Phi\kuo{u^h(s)},\frac{\partial^2u^h(s)}{\partial x^2}}ds+ \int_0^t\brackt{\nabla\Phi(u^h(s),b\kuo{u^h(s)}}ds\\
  	&+ \sum_{k=1}^m\int_0^t\brackt{\nabla \Phi\kuo{u^h(s)},\sigma_k\kuo{u^h(s)}}\dot{h}(s)ds - \int_0^t\brackt{\nabla\Phi\kuo{u^h(s)},d\eta_1(s)}
  \end{aligned}
\end{equation}
and
\begin{equation}
  \begin{aligned}
  	de^{\dkuo{-\lambda \kuo{\phi\kuo{u^h(t)}+\phi\kuo{v^h(t)}}}}&=-\lambda e^{\dkuo{-\lambda \kuo{\phi\kuo{u^h(t)}+\phi\kuo{v^h(t)}}}} d\kuo{\phi\kuo{u^h(t)}+\phi\kuo{v^h(t)}}\\
  	& = -\lambda e^{\dkuo{-\lambda \kuo{\phi\kuo{u^h(t)}+\phi\kuo{v^h(t)}}}} \left\{\left[\brackt{\nabla \Phi\kuo{u^h(t)},\frac{\partial^2u^h(t)}{\partial x^2}}\right.\right.\\
  	&\left.+\brackt{\nabla \Phi\kuo{v^h(t)},\frac{\partial^2v^h(t)}{\partial x^2}} \right]dt +\sum_{k=1}^m \left[\brackt{\nabla \Phi\kuo{u^h(t)},\sigma_k\kuo{u^h(t)}}\dot{h}(s) \right.\\
  	&+\left. \brackt{\nabla \Phi\kuo{v^h(t)},\sigma_k\kuo{v^h(t)}}\dot{h}(s)\right]dt+ \left[\brackt{\nabla \Phi\kuo{u^h(t)},b\kuo{u^h(t)}}\right.\\
  	&+\left.\brackt{\nabla \Phi\kuo{u^h(t)},b\kuo{u^h(t)}}\right]dt-\brackt{\nabla\Phi\kuo{u^h(t)},d\eta_1^h(t)}\\
  	&\left.-\brackt{\nabla\Phi\kuo{v^h(t)},d\eta_2^h(t)}\right\}
  \end{aligned}
\end{equation}

Define 
$$
\varphi(t):=\int_0^1\left[a_{i j}(u(t, x))+a_{i j}(v(t, x))\right]\left(u_i(t, x)-v_i(t, x)\right)\left(u_j(t, x)-v_j(t, x)\right) d x
$$
By chain rule we have 
\begin{equation}
  \begin{aligned}
  	d\varphi(t) &= 2\int_0^1\zkuo{a_{ij}(u^h(t,x))+a_{ij}(v^h(t,x))}\kuo{u^h_i-v^h_i}(t,x)\times\left\{\frac{\partial^2\kuo{u^h_i-v^h_i}}{\partial x^2}(t,x)dt\right.\\
  	&+\kuo{b_j\kuo{u^h}-b_j\kuo{v^h}}(t,x)dt + \sum_{k=1}^m\kuo{\sigma_{jk}\kuo{u^h}-\sigma_{jk}(v^h)}(t,x)\dot{h}(t)dt\\
  	&\left.-\gamma_j\kuo{u^h(t,x)}dK_1(t,x) + \gamma_j\kuo{v^h(t,x)}dK_2(t,x)\right\}dx\\
  	&+\int_0^1\left[a_{i j}^{\prime}(u^h(t, x)) d u^h_i(t, x)+a_{i j}^{\prime}(v^h(t, x)) d v^h_i(t, x)\right]\left(u^h_i-v^h_i\right)\left(u^h_j-v^h_j\right)(t, x) d x
  \end{aligned}
\end{equation}

 $$
\frac{1}{\alpha}\langle\nabla \Phi(u), \gamma(u)\rangle_d|u-v|^2-\sum_{i, j=1}^d a_{i j}(u)\left(u_i-v_i\right) \gamma_j(u) \leq 0
$$
$$
\frac{1}{\alpha}\langle\nabla \Phi(v), \gamma(v)\rangle_d|u-v|^2-\sum_{i, j=1}^d a_{i j}(v)\left(v_i-u_i\right) \gamma_j(v) \leq 0.
$$
We also have 
\begin{equation}
\begin{aligned}
& \int_0^1\left[a_{i j}(u^h(t, x))+a_{i j}(v^h(t, x))\right]\left(u^h_i-v^h_i\right)(t, x) \frac{\partial^2\left(u^h_j(t, x)-v^h_j(t, x)\right)}{\partial x^2} d x \\
&\leq  C \int_0^1\left(u^h_i-v^h_i\right)(t, x) \frac{\partial^2\left(u^h_i(t, x)-v^h_i(t, x)\right)}{\partial x^2} d x \\
&=-C \int_0^1\left(\frac{\partial\left(u^h_i(t, x)-v^h_i(t, x)\right)}{\partial x}\right)^2 d x \leq 0 .
\end{aligned}
\end{equation}
%结合上述式子有
\begin{equation}
\begin{aligned}
& e^{-\lambda\left(\phi\left(u^h_t\right)+\phi\left(v^h_t\right)\right)} \varphi(t) \\
&\leq  C_1  \int_0^t\norm{u^h(s)-v^h(s)}_H^2 e^{-\lambda\left(\phi\left(u^h_t\right)+\phi\left(v^h_t\right)\right)} d s \\
&+ C_2  \int_0^t e^{-\lambda\left(\phi\left(u^h_t\right)+\phi\left(v^h_t\right)\right)} \int_0^1|u^h(s, x)-v^h(s, x)|^2\left(d K_1(s, x)+d K_2(s, x)\right) \\
 & -2\int_0^t e^{-\lambda\left(\phi\left(u^h_t\right)+\phi\left(v^h_t\right)\right)} \int_0^1\left[a_{i j}(u^h(s, x))+a_{i j}(v^h(s, x))\right] \\
& \times\left(u^h_i(s, x)-v^h_i(s, x)\right)\left(\gamma_j(u^h) d K_1(s, x)-\gamma_j(v^h) d K_2(s, x)\right) \\
&-\alpha C_0 \lambda \theta  \int_0^t e^{-\lambda\left(\phi\left(u^h_t\right)+\phi\left(v^h_t\right)\right)} \int_0^1|u^h(s, x)-v^h(s, x)|^2\left(d K_1(s, x)+d K_2(s, x)\right) .
\end{aligned}
\end{equation}

Taking $\lambda = \frac{C_2 + 2C_0}{\alpha C_0\theta}$ we deduce that 
$$
e^{-\lambda\left(\phi\left(u^h_t\right)+\phi\left(v^h_t\right)\right)} \varphi(t) \leq C^{\prime} \left[\int_0^t\norm{u^h(s)-v^h(s)}_H^2 e^{-\lambda\left(\phi\left(u^h_s\right)+\phi\left(v^h_s\right)\right)} d s\right]
$$
which implies that 
$$
\sup_{t\leq T}\norm{u^h(t)- v^h(t)}_H^2\leq C\int_0^T\sup_{t\leq T}\norm{u^h(t)-v^h(t)}^2_Hdt
$$
%%%%
\vspace{-0.3cm}
Therefore by Gronwall's inequality, we can show that $u^h(t)=v^h(t)$
\end{proof}

In order to prove proposition \ref{pro:1}, we need this lemma.
\begin{lemma}\label{lem:7}
	For $N>1$, $h_{r}$ weak convergence to $h$ as $r \rightarrow 0$, then for any $T>0$, $n\geq 1$
\begin{equation}
\lim _{r \rightarrow 0}\left(\sup _{0 \leq t \leq T}\left\|u^{n, h_{r}}(t)-u^{n, h}(t)\right\|_H^2+\int_0^T\left\|u^{n, h_{r}}(t)-u^{n, h}(t)\right\|_V^2 d t\right)=0 .
\end{equation}
\end{lemma}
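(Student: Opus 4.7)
The plan is to combine the uniform a priori bounds from Lemma~\ref{lem:4} with a compactness/uniqueness argument; a direct Gronwall estimate is awkward here because $\dot{h}_r$ converges to $\dot{h}$ only in the weak topology of $L^2$. Set $v_r:=u^{n,h_r}-u^{n,h}$. Since every $h_r\in S_N$, we have $\|\dot{h}_r\|_{L^2([0,T])}\leq\sqrt{N}$, and by Cauchy--Schwarz the family $\{h_r\}$ is equicontinuous on $[0,T]$; combined with the pointwise convergence $h_r(t)\to h(t)$ forced by weak convergence of $\dot{h}_r$, Arzel\`a--Ascoli yields the key fact that $h_r\to h$ \emph{uniformly} on $[0,T]$, which I will use repeatedly.

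Step one: uniform bounds. Lemma~\ref{lem:4}, applied with $h$ ranging over $S_N$, gives
\[
\sup_r\left(\sup_{0\leq t\leq T}\|u^{n,h_r}(t)\|_V^2+\int_0^T\|u^{n,h_r}(t)\|_{H^2}^2\,dt\right)<\infty.
\]
Reading $\partial_t u^{n,h_r}$ off \eqref{eq:penalized} and using the linear growth of $b$ and $\sigma$, the boundedness of $\gamma$, the Lipschitz property of $\pi$ and the $L^2$-bound on $\dot{h}_r$, one checks (with a constant depending on $n$) that $\{\partial_t u^{n,h_r}\}_r$ is bounded in $L^2([0,T];H)$. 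Since $V$ embeds compactly in $H$, the Aubin--Lions/Simon compactness theorem gives precompactness of $\{u^{n,h_r}\}_r$ in $C([0,T];H)$, together with weak-$\ast$ precompactness in $L^\infty([0,T];V)$ and weak precompactness in $L^2([0,T];H^2)$.

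Step two: identify the limit. Pick any subsequence $u^{n,h_{r_k}}\to w$ in $C([0,T];H)$. The only term in the integral form of \eqref{eq:penalized} whose passage to the limit is non-routine is $\int_0^t\sigma(u^{n,h_{r_k}})\dot{h}_{r_k}\,ds$. Writing
\[
\int_0^t\sigma(u^{n,h_{r_k}})\dot{h}_{r_k}\,ds-\int_0^t\sigma(w)\dot{h}\,ds=\int_0^t[\sigma(u^{n,h_{r_k}})-\sigma(w)]\dot{h}_{r_k}\,ds+\int_0^t\sigma(w)(\dot{h}_{r_k}-\dot{h})\,ds,
\]
the first term is bounded by $C\sqrt{N}\|u^{n,h_{r_k}}-w\|_{C([0,T];H)}\to 0$ by Lipschitz $\sigma$ and Cauchy--Schwarz, while the second vanishes because $\sigma(w)\in L^2([0,T];H)$ is fixed and $\dot{h}_{r_k}\rightharpoonup\dot{h}$ weakly in $L^2$; the supremum over $t\in[0,T]$ is then handled by an integration by parts together with the uniform convergence $h_{r_k}\to h$. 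The $b$- and penalisation-terms pass to the limit via their Lipschitz continuity. Hence $w$ solves \eqref{eq:penalized} with control $h$; since for fixed $n$ every coefficient of that equation is globally Lipschitz in $u$, a standard Gronwall estimate gives uniqueness, forcing $w=u^{n,h}$. A subsequence-of-every-subsequence argument now upgrades this to $u^{n,h_r}\to u^{n,h}$ in $C([0,T];H)$.

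Step three: $L^2([0,T];V)$-convergence. Applying the chain rule to $\|v_r(t)\|_H^2$ and integrating the Laplacian term by parts yields
\[
\|v_r(T)\|_H^2+2\int_0^T\|v_r(s)\|_V^2\,ds=R_1(r)+R_2(r)+R_3(r),
\]
where $R_1,R_3$ arise from the $b$- and penalisation-differences and are each bounded by $C\|v_r\|_{C([0,T];H)}^2$ via Lipschitz estimates, and $R_2=2\int_0^T\langle v_r,\sigma(u^{n,h_r})\dot{h}_r-\sigma(u^{n,h})\dot{h}\rangle\,ds$, split as in step two, is dominated by $C\sqrt{N}\|v_r\|_{C([0,T];H)}^2+2\|v_r\|_{C([0,T];H)}\bigl\|\int_0^{\cdot}\sigma(u^{n,h})(\dot{h}_r-\dot{h})\,ds\bigr\|_{L^\infty([0,T];H)}$, both of whose factors vanish as $r\to 0$. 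The $C([0,T];H)$-convergence from step two therefore forces $\int_0^T\|v_r\|_V^2\,ds\to 0$, completing the proof. The main obstacle is the mixed strong--weak convergence of $\sigma(u^{n,h_r})\dot{h}_r$ in step two: the strong $C([0,T];H)$-convergence of $u^{n,h_r}$, gained by compactness, has to be played against the merely weak $L^2$-convergence of $\dot{h}_r$; once this product identification is settled, the remainder is Gronwall and Lipschitz bookkeeping.
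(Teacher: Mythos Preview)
Your proof is correct and follows a genuinely different route from the paper's. The paper writes out the energy identity for $\|u^{n,h_r}-u^{n,h}\|_H^2$ directly, isolates the cross term involving $(\dot h_r-\dot h)$, asserts that it vanishes by the weak convergence of $\dot h_r$, and then refers to the Gronwall scheme of Lemma~\ref{lem:6} for the remaining terms. This is much shorter, but the vanishing of that cross term is exactly the delicate point you flag at the outset: its integrand still depends on $r$ through $u^{n,h_r}$, so weak convergence alone does not immediately give the limit (one can rescue the argument by an integration by parts in time using the uniform convergence $h_r\to h$, but the paper does not do this). Your compactness strategy sidesteps the difficulty cleanly: the uniform $L^\infty_tV\cap L^2_tH^2$ bounds from Lemma~\ref{lem:4} and the $L^2_tH$ bound on $\partial_t u^{n,h_r}$ feed Aubin--Lions--Simon to produce strong $C([0,T];H)$ compactness, after which the product $\sigma(u^{n,h_{r_k}})\dot h_{r_k}$ passes to the limit by the standard strong--weak pairing, and uniqueness of the (globally Lipschitz, for fixed $n$) penalised equation identifies the limit. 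What your approach costs in length it repays in robustness. One minor imprecision: the bound you record for the second piece of $R_2$ in Step~3 is not literally a pointwise estimate of $\int_0^T\langle v_r,\sigma(u^{n,h})(\dot h_r-\dot h)\rangle\,ds$; the cruder bound $|R_2^{(2)}|\le C\sqrt{TN}\,\|v_r\|_{C([0,T];H)}$ already suffices and tends to zero by Step~2, so the conclusion stands.
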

\begin{proof}
\begin{equation}
  \begin{aligned}
  	\norm{u^{n,h_r}(t) - u^{n,h}(t)}^2_H & = 2\int_0^t\brackt{u^{n,h_r}(s) - u^{n,h}(s),\frac{\partial^2\kuo{u^{n,h_r}(s)-u^{n,h}(s)}}{\partial x^2}}ds\\
  	&+2\sum_{k=1}^m\int_0^t \brackt{u^{n,h_r}(s) - u^{n,h}(s),\sigma_k\kuo{u^{n,h_r}(s)}-\sigma_k\kuo{u^{n,h}(s)}}\dot{h}(s)ds\\
  	&+ 2 \sum_{k=1}^m \int_0^t \brackt{u^{n,h_r}(s),\sigma_k\kuo{u^{n,h_r}(s)}}\kuo{\dot{h}_r(s)-\dot{h}(s)}ds  \\
  	&+2 \int_0^t\brackt{\skpen(s)-u^{m,h}(s),b\kuo{\skpen(s)}-b\kuo{u^{m,h}(s)}}ds\\
  	& -2n\int_0^t\brackt{\skpen(s)-u^{m,h}(s),\gamma\kuo{\skpen(s)}\abs{\skpen(s) - \pi\kuo{\skpen(s)}}}ds\\
  	&+2m\int_0^t\brackt{\skpen(s)-u^{m,h}(s),\gamma\kuo{u^{m,h}(s)}\abs{u^{m,h}(s)-\pi\kuo{u^{m,h}(s)}}}ds
  	  \end{aligned}
\end{equation}

Since $h_r$ weak converges to $h$ as $r\rightarrow 0$, we have 
\begin{equation}
  \lim\limits_{r\rightarrow 0}2 \sum_{k=1}^m \int_0^t \brackt{u^{n,h_r}(s),\sigma_k\kuo{u^{n,h_r}(s)}}\kuo{\dot{h}_r(s)-\dot{h}(s)}ds  = 0
\end{equation}
By a similar argument as Lemma \ref{lem:6}, we can prove Lemma \ref{lem:7}.
\end{proof}
\begin{proposition}\label{pro:1}
For $N>1$, $h_{r}$ weak convergence to $h$ as $r \rightarrow 0$, then for any $T>0$, $n\geq 1$, $\kuo{u^{h_r},\eta^{h_r}}$ converges to $\kuo{u^h,\eta^h}$.
\end{proposition}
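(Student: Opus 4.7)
The plan is to prove the convergence $(u^{h_r},\eta^{h_r})\to (u^h,\eta^h)$ by combining the two approximations we already have at our disposal: the penalized approximation $u^{n,h}$ to $u^h$ (Lemma \ref{lem:6}) which is \emph{uniform} in $h\in S_N$, and the continuity of $h\mapsto u^{n,h}$ at fixed $n$ along weak convergence (Lemma \ref{lem:7}). A standard three-term splitting will then suffice for the $u$-component, and a weak-$*$ compactness argument in $\mathcal{M}(Q_T)$ will deliver the $\eta$-component.

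For the convergence of $u^{h_r}\to u^h$ in $C([0,T];H)\cap L^2([0,T];V)$, I would fix $\varepsilon>0$ and, using Lemma \ref{lem:6}, choose $n$ so large that
\[
\sup_{g\in S_N}\Bigl(\sup_{0\le t\le T}\|u^{n,g}(t)-u^{g}(t)\|_H^2+\int_0^T\|u^{n,g}(t)-u^{g}(t)\|_V^2\,dt\Bigr)<\varepsilon.
\]
Since $h_r,h\in S_N$ (weak convergence preserves the ball), this bound applies to both $g=h_r$ and $g=h$. Then, with this $n$ fixed, Lemma \ref{lem:7} gives
\[
\sup_{0\le t\le T}\|u^{n,h_r}(t)-u^{n,h}(t)\|_H^2+\int_0^T\|u^{n,h_r}(t)-u^{n,h}(t)\|_V^2\,dt \to 0 \quad \text{as } r\to 0.
\]
The triangle inequality in the norm $\sup_{[0,T]}\|\cdot\|_H^2+\int_0^T\|\cdot\|_V^2\,dt$ then yields $u^{h_r}\to u^h$ in $C([0,T];H)\cap L^2([0,T];V)$.

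For $\eta^{h_r}\to\eta^h$ in the weak-$*$ topology of $\mathcal{M}(Q_T)$, I would first use the uniform total variation bound from \eqref{eq:bound_eta_h}, which passes to the limit $n\to\infty$ uniformly in $h\in S_N$, giving $\sup_{g\in S_N}\mathrm{Var}_{Q_T}(\eta^g)<\infty$. Hence $\{\eta^{h_r}\}_{r>0}$ is relatively compact in the weak-$*$ topology of $\mathcal{M}(Q_T)=C(Q_T)^*$. Along any weak-$*$ convergent subsequence $\eta^{h_{r_k}}\rightharpoonup \tilde\eta$, I would pass to the limit in the weak formulation (item (iii) of the definition of a solution), using the $u$-convergence established above together with the fact that weak convergence of $h_r$ transforms the deterministic stochastic term $\sum_k\int_0^t\langle\sigma_k(u^{h_r}(s)),\varphi\rangle\dot h_r(s)\,ds$ into the corresponding term with $h$ (this is where we need $u^{h_r}\to u^h$ strongly in $L^2([0,T];H)$ and the Lipschitz continuity of $\sigma$, combined with $\dot h_r\rightharpoonup \dot h$ in $L^2([0,T];\mathbb{R}^m)$). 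The variational inequality (iv) passes to the limit by lower semicontinuity, exactly as in the proof of Theorem \ref{thm:1}. Uniqueness of the skeleton equation then forces $\tilde\eta=\eta^h$, so the whole family converges.

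The main obstacle I anticipate is passing to the limit in the product $\sigma_k(u^{h_r})\dot h_r$: neither factor converges strongly, but the combination of strong $L^2$-convergence of $u^{h_r}$ (hence of $\sigma_k(u^{h_r})$ by the Lipschitz assumption \textbf{A.3}) with weak $L^2$-convergence of $\dot h_r$ gives convergence of the bilinear pairing $\int_0^t\langle\sigma_k(u^{h_r}(s)),\varphi(s)\rangle\dot h_r(s)\,ds$ for every test function $\varphi$. A secondary delicate point is the variational inequality (iv): one needs to check that $\int_0^T\!\!\int_0^1\langle u^{h_r}(t,x)-\phi(t,x),(a_{ij}(u^{h_r}))\,\eta^{h_r}(dt,dx)\rangle_d\ge 0$ passes to the corresponding inequality for $(u^h,\eta^h)$, which follows from the strong convergence of $u^{h_r}$ in $C(Q_T)$ (via the Sobolev embedding applied to Lemma \ref{lem:4}) and the weak-$*$ convergence of $\eta^{h_r}$, exploiting that $a_{ij}$ is continuous and bounded.
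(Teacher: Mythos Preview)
Your proposal is correct and, for the $u$-component, follows exactly the paper's approach: a three-term triangle-inequality splitting that combines the uniform-in-$h$ penalization convergence (Lemma~\ref{lem:6}) with the fixed-$n$ weak continuity in $h$ (Lemma~\ref{lem:7}). You go further than the paper by supplying an explicit weak-$*$ compactness and identification argument for the $\eta$-component, which the paper's own proof omits; your treatment is thus more complete than the original.
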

\begin{proof}
	We only need to show 
		\begin{equation}
\lim _{r \rightarrow 0}\left(\sup _{0 \leq t \leq T}\left\|u^{n, h_{r}}(t)-u^{n, h}(t)\right\|_H^2+\int_0^T\left\|u^{n, h_{r}}(t)-u^{n, h}(t)\right\|_V^2 d t\right)=0 
\end{equation} 
We have 
\begin{equation}
  \begin{aligned}
&\left\|u^{n, h_{r}}(t)-u^{n, h}(t)\right\|_H^2+\int_0^T\left\|u^{n, h^{(r)}}(t)-u^{n, h}(t)\right\|_V^2 d t \\
&\leq 3\norm{u^{n,h_r}-u^{h_r}}_H^2 +3\norm{u^{n,h_r}-u^{h}}_H^2+3\norm{u^{n,h}-u^{n}}_H^2\\
&+ 6\int_0^T\norm{u^{n,h_r}-u^{h_r}}_V^2dt + 6\int_0^T\norm{u^{n,h_r}-u^{h}}_V^2dt + \int_0^T\norm{u^{n,h}-u^{n}}_H^2dt
 \end{aligned}
\end{equation}
From Lemma \ref{lem:6} , for any $\alpha$, there exists $N>0$ for any $n\geq N$,
\begin{equation}\label{eq:pro_1}
  \sup_{0\leq t\leq T}\norm{u^{n,h_r}-u^{h_r}}^2_H + \int_0^T\norm{u^{n,h_r}-u^{h_r}}^2_Vdt \leq\frac{1}{18}\alpha
\end{equation}
\begin{equation}\label{eq:pro_2}
  \sup_{0\leq t\leq T}\norm{u^{n,h}-u^{h}}^2_H + \int_0^T\norm{u^{n,h}-u^{h}}^2_Vdt \leq\frac{1}{18}\alpha
\end{equation}
From Lemma \ref{lem:7}, there exists $r_0>0$, for any $r\in \kuo{0,r_0}$, we have 
\begin{equation}\label{eq:pro_3}
  \sup_{0\leq t\leq T}\norm{u^{n,h_r}-u^{n,h}}^2_H + \int_0^T\norm{u^{n,h_r}-u^{n,h}}^2_Vdt \leq\frac{1}{18}\alpha
\end{equation}

Combining \eqref{eq:pro_1}, \eqref{eq:pro_2} and \eqref{eq:pro_3}, we can show 
$$
\lim _{r \rightarrow 0}\left(\sup _{0 \leq t \leq T}\left\|u^{n, h_{r}}(t)-u^{n, h}(t)\right\|_H^2+\int_0^T\left\|u^{n, h_{r}}(t)-u^{n, h}(t)\right\|_V^2 d t\right)=0 
$$
\end{proof}
\section{Large deviation principle}\label{sec:4}
For any $\varepsilon >0$, we can define a measurable mapping $\mathcal{G}^\varepsilon:C([0,T];H)\rightarrow C([0,T];H)\cap L^2([0,T];V)$
\begin{equation}
  \mathcal{G}^\varepsilon(B(\cdot)) := u^\varepsilon
\end{equation}
where $u^\varepsilon$ is the solution of \eqref{eq:sn_SPDE}. Let $\dkuo{h^\varepsilon}_{\varepsilon>0}\in \mathcal{A}_N$, from Girsanov Theorem, $Y^\varepsilon:= \mathcal{G}^\varepsilon\kuo{B(\cdot)+\frac{1}{\sqrt{\varepsilon}}\int_0^{\cdot}\dot{h}^\varepsilon(s)ds}$ is the solution of \eqref{eq:c_SPDE} 
\begin{equation}\label{eq:c_SPDE}
\left\{\begin{aligned}
&dY_i^\varepsilon(t,x) = \frac{\partial^2 Y^\varepsilon(t,x)}{\partial x^2}dt + b_i\kuo{\contro(t,x)}dt + \sqrt{\varepsilon}\sum_{k=1}^m \sigma_k\kuo{\contro(t,x)}dB_j
\\
& \quad \quad \quad \quad + \sum_{k=1}^m \sigma_k\kuo{\contro(t,x)}h^\varepsilon_k(t)dt -\gamma_i\kuo{\contro(t,x)}dk_i(t,x)\quad x \in [0,1]\\
& Y^\varepsilon(0,\cdot) = (Y^\varepsilon_1(0,\cdot), Y^\varepsilon_2(0,\cdot),\cdots,Y^\varepsilon_n(0,\cdot))^T \in \bar{\mathcal{O}}\\
& Y^\varepsilon(t,0) = Y(t,1) = 0
\end{aligned}\right.
\end{equation}

Let $Z = \mathcal{G}^0\kuo{\int_0^\cdot \dot{h}^\varepsilon(s)ds}$ is the solution of \eqref{eq:Z}.
\begin{equation}\label{eq:Z}
\left\{\begin{aligned}
&d Z^{\varepsilon}(t, x)= \frac{\partial^2 Z^{\varepsilon}(t, x)}{\partial x^2} d t+b_i\kuo{Z^\varepsilon(t,x)}dt+\sum_{j=1}^m \sigma_j\left(Z^{\varepsilon}(t, x)\right) \dot{h}_j^{\varepsilon}(t) d t\\
&\quad\quad\quad \quad -\gamma_i\kuo{Z^\varepsilon(t,x)} d k_i^{\varepsilon, Z}(t, x), \quad x \in[0,1] \\
&Z^{\varepsilon}(0, \cdot) = \kuo{Z^\varepsilon_1(0,\cdot),Z^\varepsilon_2(0,\cdot),\cdots,Z^\varepsilon_n(0,\cdot)}^T\in \bar{\mathcal{O}} \\
&Z^{\varepsilon}(t, 0)= Z^{\varepsilon}(t, 1)=0
\end{aligned}\right.
\end{equation}
We using penalized method to get some priori estimates. Consider penalized system of $Y^\varepsilon$ and $Z^\varepsilon$.
\begin{equation}\label{eq:pen_Y}
  \begin{aligned}
  \pencontro(t,x) &= u(0,x) + \int_0^t \frac{\partial^2 \pencontro (s,x)}{\partial x^2}ds + \int_0^t\sigma\kuo{\pencontro(s,x)}\dot{h}^\varepsilon(t)dt + \int_0^tb\kuo{\pencontro(s,x)}ds\\
  &+\int_0^t\sigma\kuo{\pencontro(s,x)}\dot{h}^\varepsilon(s)ds +\sqrt{\varepsilon}\int_0^t\sigma\kuo{\pencontro(s,x)}dB_s \\
  &- n\int_0^t\gamma\kuo{\pencontro(s,x)}\abs{\pencontro(s,x)-\pi\kuo{\pencontro(s,x)}}ds
  \end{aligned}
\end{equation}
\begin{equation}\label{eq:pen_Z}
  \begin{aligned}
  \penhsk(t,x) &= u(0,x) + \int_0^t\frac{\partial^2 \penhsk(s,x)}{\partial x^2}ds + \int_0^tb_i\kuo{\penhsk(s,x)}ds + \int_0^t\sigma\kuo{\penhsk(s,x)}\dot{h}^\varepsilon(s)ds\\
  &-n\int_0^t\gamma\kuo{\penhsk(s,x)}\abs{\penhsk(s,x) - \pi\kuo{\penhsk(s,x)}}ds
  \end{aligned}
\end{equation}

% 从SPDE with oblique中得到
As $n\rightarrow \infty$, we have $\pencontro \rightarrow Y^\varepsilon$ and $\penhsk \rightarrow Z^\varepsilon$.
We need some priori estimates for $\pencontro$ and $\penhsk$. Those proof are similar to \cite{Duan2019WhiteND}, we sketch it here.
\begin{lemma}\label{lem:8}
	Let $\nu^n = \dkuo{\pencontro, \penhsk}$
	\begin{enumerate}[(i)]
		\item  we have constant $M_T$ such that
		\begin{equation}
  \sup_n\e\zkuo{\kuo{n\int_0^T \norm{\nu^n(t)-\pi\kuo{\nu^n}}_{L^1([0,1])}dt}^2}\leq M_T
\end{equation}
		\item 
	\begin{equation}
  \lim\limits_{n\rightarrow \infty}\e\zkuo{\sup_{0\leq t\leq T}\norm{\nu^n(t)-\pi\kuo{\nu^n(t)}}_H^2}=0
  \end{equation}
  \item 
  \begin{equation}
  \lim\limits_{n\rightarrow \infty} \e\zkuo{\sup_{0\leq t\leq T}\norm{\nu^n(t)-\pi\kuo{\nu^n(t)}}_{L^\infty([0,1])}^2}=0
\end{equation}
	\item 
	Assume that $\nu^n(0) \in V$. Then we have 
	\begin{equation}
  \begin{aligned}
  	\sup_{n}\e\zkuo{\sup_{0\leq t \leq T}\norm{\nu^n(t)}^2_V}&\leq \infty\\
  	\sup_{n}\e\zkuo{\int_0^T\norm{\nu^n(t)}^2_{H^2}}&\leq \infty
  \end{aligned}
\end{equation}

	\end{enumerate}	
\end{lemma}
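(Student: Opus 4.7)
The plan is to mirror the deterministic arguments used for the skeleton solution $\skpen$ in Lemmas \ref{lem:1}--\ref{lem:5} and Corollaries \ref{cor:1}--\ref{cor:2}, but now working inside an expectation and replacing the pathwise estimates of $\sqrt{\varepsilon}\sum_k\int_0^t\sigma_k(\pencontro)dB_k(s)$ by martingale tools. Since $\penhsk$ satisfies exactly the same equation as $\skpen$ (with $h=h^\varepsilon$ random but adapted), every bound we proved pathwise for $\skpen$ transfers in expectation to $\penhsk$; the genuinely new work is only for $\pencontro$, and there the sole additional ingredient is Itô's formula together with the Burkholder-Davis-Gundy (BDG) inequality.

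For (i) and (iv) I would start from Itô's formula for $\|\pencontro(t)\|_H^2$, $\|\pencontro(t)\|_H^4$ and $\|\pencontro(t)\|_V^2$. Compared with the deterministic computations in \eqref{eq:lem_2_1} and the proof of Lemma \ref{lem:4}, each identity acquires one stochastic integral of the form $2\sqrt{\varepsilon}\sum_k\int_0^t\langle \pencontro(s),\sigma_k(\pencontro(s))\rangle\,dB_k(s)$ (or its $V$-inner-product analogue) and one Itô correction $\varepsilon\sum_k\int_0^t\|\sigma_k(\pencontro(s))\|_H^2\,ds$. Taking expectations removes the martingale; the Itô correction is absorbed by the linear growth of $\sigma$ and Gronwall. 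The coercivity estimate
\[
2n\int_0^t\brackt{\pencontro(s),\gamma(\pencontro(s))|\pencontro(s)-\pi(\pencontro(s))|}ds\geq 2\delta n\int_0^t\|\pencontro(s)-\pi(\pencontro(s))\|_{L^1([0,1])}\,ds
\]
is unchanged and yields (i). For (iv), the same integration-by-parts trick $\langle \pencontro,\gamma(\pencontro)|\pencontro-\pi(\pencontro)|\rangle_V = -\int_0^1\langle \partial_x^2\pencontro,\gamma\,|\pencontro-\pi(\pencontro)|\rangle_d\,dx$, combined with Young's inequality, the $n^2$-bound from (the stochastic analogue of) Lemma \ref{lem:3}, and the linear growth of $b,\sigma$, gives both estimates in (iv) after Gronwall.

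For (ii) I would apply Itô's formula to $\psi(u)=\|u-\pi(u)\|_H^4$ as in the proof of Lemma \ref{lem:5}. The deterministic decomposition into $I_1,\dots,I_5$ is preserved, with the sign of $I_1$ and $I_4$ unaltered (they rely only on convexity of $\mathcal{O}$ and on condition (B) on $\gamma$). The new stochastic term
\[
M^n_t:=4\sqrt{\varepsilon}\sum_{k=1}^m\int_0^t\Big(\int_0^1 q(\pencontro(s,x))\,dx\Big)\brackt{\pencontro(s)-\pi(\pencontro(s)),\sigma_k(\pencontro(s))}dB_k(s)
\]
is handled by BDG: $\mathbb{E}[\sup_{t\leq T}|M^n_t|]\le C\sqrt{\varepsilon}\,\mathbb{E}[(\int_0^T\psi(\pencontro(s))\|\pencontro(s)\|_H^2\,ds)^{1/2}]$, which after Young's inequality is absorbed into $\frac14\mathbb{E}\sup_{t\le T}\psi(\pencontro(t))$ plus a term controlled by the bound of (i) and Lemma \ref{lem:1}'s expectation version. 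The Itô correction and the $I_2$-type term from $\dot h^\varepsilon\in S_N$ are estimated exactly as in \eqref{eq:lem_2_2}-style bounds. Closing the loop with (i) yields $\mathbb{E}[\sup_{t\le T}\psi(\pencontro(t))]\le C/n$, hence (ii). Item (iii) then follows by the Sobolev embedding argument of Corollary \ref{cor:2}, applied inside $\mathbb{E}[\sup_{t\le T}\cdot]$, combining (ii) and (iv).

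The main obstacle is the BDG bookkeeping in (ii): to close the estimate $\mathbb{E}\sup_{t\le T}\psi(\pencontro(t))\le C/n$ we must carefully make the coefficient of the absorbed supremum term strictly less than $1$, and we must ensure that the remainder depends on $n$ only through a factor vanishing as $n\to\infty$ (this is where the estimate $n\rho\mathbb{E}\int_0^T\|\pencontro(s)\|_H^2\|\pencontro(s)-\pi(\pencontro(s))\|_H^2ds\le L$ from (the stochastic analogue of) Lemma \ref{lem:1} is crucial). Provided the constants from BDG, Young, Gronwall and the interior-cone/convexity inequalities are tracked in the right order, the argument parallels Section 3 line by line; no new analytic idea beyond Itô plus BDG is required, which is why the author can afford merely to sketch it.
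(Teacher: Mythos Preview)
Your proposal is correct and matches the paper's approach: the paper gives no explicit proof of this lemma, stating only that the arguments are ``similar to \cite{Duan2019WhiteND}'' (whose estimates are precisely the stochastic counterparts of Lemmas~\ref{lem:1}--\ref{lem:5} and Corollaries~\ref{cor:1}--\ref{cor:2}), and your sketch --- carrying over the deterministic computations of Section~\ref{sec:3} with It\^o's formula and BDG replacing the pathwise bounds on the noise term --- is exactly that adaptation.
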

Based on penalized system of  \eqref{eq:pen_Y} and \eqref{eq:pen_Z}, mimicking the proof of \cite{Duan2019WhiteND}[Theorem 3.9] , we have the following Lemma.
\begin{lemma}\label{lem:LDP_1}
	For any $\dkuo{h^\varepsilon}\in \mathcal{A}_N$
	\begin{equation}
\begin{aligned}
&\mathbb{E}\left[\sup _{0 \leq t \leq T}\left\|Y^{\varepsilon}(t)\right\|_H^2+\int_0^T\left\|Y^{\varepsilon}(t)\right\|_V^2 d t\right]<\infty, \\
&\mathbb{E}\left[\sup _{0 \leq t \leq T}\left\|Z^{\varepsilon}(t)\right\|_H^2+\int_0^T\left\|Z^{\varepsilon}(t)\right\|_V^2 d t\right]<\infty .
\end{aligned}
\end{equation}
\end{lemma}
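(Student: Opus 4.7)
The plan is to obtain the estimates via the penalized approximations \eqref{eq:pen_Y} and \eqref{eq:pen_Z}, establish bounds uniform in $n$, and then pass to the limit using Lemma \ref{lem:8}. I will present the argument for $Y^\varepsilon$; the case of $Z^\varepsilon$ is strictly easier since it has no stochastic integral and we have already controlled similar skeleton-type quantities in Lemma \ref{lem:1} (the $\dot h^\varepsilon$ term is handled identically once one conditions on a path with $\|h^\varepsilon\|_{\mathcal H}^2 \le N$).

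First I would apply Itô's formula to $\|\pencontro(t)\|_H^2$. This produces six terms: the Laplacian term equals $-2\int_0^t \|\pencontro(s)\|_V^2\,ds$; the drift and control terms are controlled by the linear growth of $b$ and $\sigma$; the martingale part will be handled by Burkholder--Davis--Gundy; the quadratic variation term contributes $\varepsilon\sum_k \int_0^t \|\sigma_k(\pencontro(s))\|_H^2\,ds$; and the crucial penalization term
\[
-2n\int_0^t \brackt{\pencontro(s),\gamma(\pencontro(s))|\pencontro(s)-\pi(\pencontro(s))|}ds
\]
is non-positive by exactly the split of Lemma \ref{lem:2}, using assumption \eqref{eq:ass_gam} together with $\brackt{\gamma,n}_d \ge \rho$. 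Dropping it upstairs is what gives us a free estimate.

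The control term is the main new ingredient compared to Lemma \ref{lem:1}. For $h^\varepsilon \in \mathcal A_N$, by Cauchy--Schwarz and the linear growth of $\sigma$,
\[
2\sum_k \int_0^t \brackt{\pencontro(s),\sigma_k(\pencontro(s))}\dot h_k^\varepsilon(s)\,ds \le 2\Bigl(\int_0^t \|\dot h^\varepsilon(s)\|^2 ds\Bigr)^{1/2}\Bigl(\int_0^t \|\pencontro(s)\|_H^2 (1+\|\pencontro(s)\|_H^2)\,ds\Bigr)^{1/2},
\]
which is $\le C\sqrt{N}\bigl(1+\sup_{s\le t}\|\pencontro(s)\|_H^2\bigr)\bigl(\int_0^t(1+\|\pencontro(s)\|_H^2)ds\bigr)^{1/2}$ and is absorbed using $2ab \le \tfrac14 a^2 + 4b^2$ into $\tfrac14 \sup_{s\le t}\|\pencontro(s)\|_H^2$ plus $C_N \int_0^t(1+\|\pencontro(s)\|_H^2)ds$. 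For the stochastic integral, BDG gives
\[
\e\sup_{s\le t}\Bigl|\sqrt{\varepsilon}\sum_k \int_0^s \brackt{\pencontro,\sigma_k(\pencontro)}dB_k\Bigr| \le C\sqrt{\varepsilon}\,\e\Bigl(\int_0^t\|\pencontro(s)\|_H^2(1+\|\pencontro(s)\|_H^2)ds\Bigr)^{1/2},
\]
which is also absorbed into $\tfrac14 \e\sup_{s\le t}\|\pencontro(s)\|_H^2$ plus a Gronwall-integrable remainder. Taking expectations and applying Gronwall yields
\[
\sup_n \e\Bigl[\sup_{0\le t\le T}\|\pencontro(t)\|_H^2 + \int_0^T \|\pencontro(t)\|_V^2\,dt\Bigr] \le C_{N,T}.
\]

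Finally, by Lemma \ref{lem:8}(ii)--(iv), $\pencontro \to \contro$ in $L^2(\Omega; C([0,T];H) \cap L^2([0,T];V))$ as $n\to\infty$, so Fatou's lemma transfers the uniform bound to $\contro$. The analogous argument for $\penhsk$ (without the $\sqrt{\varepsilon}$-martingale term) yields the bound for $\hsk$. The main technical obstacle is that the two ``square-root of an $H$-norm squared times $N$'' bounds (the control term and the BDG bound) both want to eat the supremum on the left-hand side; one must be careful to choose the Young's-inequality split so that only a small fraction of $\e\sup_{s\le t}\|\pencontro(s)\|_H^2$ is produced on the right, which is possible exactly because we can afford constants depending on $N$ and $T$ in the final bound.
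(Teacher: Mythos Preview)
Your proposal is correct and follows exactly the penalization-then-limit strategy the paper invokes (the paper itself gives no details, only citing \cite{Duan2019WhiteND}, Theorem 3.9). One small mis-citation: the convergence $\pencontro\to\contro$ is not contained in Lemma~\ref{lem:8} but is the well-posedness result imported from that reference; once it is available, Fatou (or weak lower semicontinuity) transfers the uniform bound as you indicate.
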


\begin{lemma}\label{lem:LDP_2}
	Existing $\lambda>0$ such that
	\begin{equation}
\begin{aligned}
\lim _{\varepsilon \rightarrow 0}\{\mathbb{E}& {\left[\sup _{0 \leq t \leq T}\left(\exp \left\{-\lambda \int_0^t\left(\left\|Y^{\varepsilon}(s)\right\|_V^2+\left\|Z^{\varepsilon}(s)\right\|_V^2\right) d s\right\}\left\|Y^{\varepsilon}(t)-Z^{\varepsilon}(t)\right\|_H^2\right)\right] } \\
&\left.+\mathbb{E}\left[\int_0^T \exp \left\{-\lambda \int_0^t\left(\left\|Y^{\varepsilon}(s)\right\|_V^2+\left\|Z^{\varepsilon}(s)\right\|_V^2\right) d s\right\}\left\|Y^{\varepsilon}(t)-Z^{\varepsilon}(t)\right\|_V^2 d t\right]\right\}=0 .
\end{aligned}
\end{equation} 
\end{lemma}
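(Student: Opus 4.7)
The plan is to apply Itô's formula to the exponentially-weighted $H$-norm of the difference $Y^\varepsilon - Z^\varepsilon$, in the spirit of the uniqueness argument in Theorem \ref{thm:1}, but at the level of the penalized systems \eqref{eq:pen_Y}--\eqref{eq:pen_Z} where the reflection is replaced by a smooth drift. Writing $W^{n,\varepsilon} := \pencontro - \penhsk$ and $A^{n,\varepsilon}(t) := \int_0^t (\|\pencontro(s)\|_V^2 + \|\penhsk(s)\|_V^2)\,ds$, the starting point is the product-rule expansion
\begin{equation*}
d\bigl(e^{-\lambda A^{n,\varepsilon}(t)}\|W^{n,\varepsilon}(t)\|_H^2\bigr)
 = e^{-\lambda A^{n,\varepsilon}(t)}\Bigl(d\|W^{n,\varepsilon}(t)\|_H^2
 - \lambda\bigl(\|\pencontro\|_V^2 + \|\penhsk\|_V^2\bigr)\|W^{n,\varepsilon}\|_H^2\,dt\Bigr).
\end{equation*}
The differential $d\|W^{n,\varepsilon}\|_H^2$ produces six contributions: the Laplacian pairing, which is the coercive $-2\|W^{n,\varepsilon}\|_V^2\,dt$; the Lipschitz drift $b$; the cross term $\langle W^{n,\varepsilon},(\sigma(\pencontro)-\sigma(\penhsk))\dot h^\varepsilon\rangle$; the Itô correction $\varepsilon\sum_k\|\sigma_k(\pencontro)\|_H^2\,dt$; the stochastic integral $2\sqrt\varepsilon\langle W^{n,\varepsilon},\sigma(\pencontro)\,dB\rangle$; and the two penalization terms coming from the $-n\gamma(\cdot)|\cdot-\pi(\cdot)|$ drifts.

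I would then bound each block separately. The Lipschitz pieces give $C\|W^{n,\varepsilon}\|_H^2(1+|\dot h^\varepsilon(t)|^2)$, whose time integral is controlled because $h^\varepsilon\in\mathcal{A}_N$. The Itô correction contributes $\varepsilon C(1+\mathbb{E}\|\pencontro\|_H^2)T$, which vanishes as $\varepsilon\to 0$ by Lemma \ref{lem:LDP_1}. The martingale is handled by Burkholder--Davis--Gundy after taking $\sup_{t\le T}$ and expectation, producing an overall $\sqrt\varepsilon$ prefactor times a norm bounded by Lemma \ref{lem:LDP_1}. The two penalization terms are treated exactly as in the proof of Lemma \ref{lem:6}: decompose $W^{n,\varepsilon}$ through the projection $\pi$, apply Cauchy--Schwarz, and invoke items (i)--(iv) of Lemma \ref{lem:8} to show that their combined contribution vanishes as $n\to\infty$ uniformly in $\varepsilon$.

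The role of the exponential weight $e^{-\lambda A^{n,\varepsilon}}$ is to absorb the bad cross terms of the form $\|\pencontro\|_V^2 \|W^{n,\varepsilon}\|_H^2$ that appear naturally once one upgrades the $H$-estimate of $\pencontro-\pi(\pencontro)$ to an $L^\infty([0,1])$-estimate via the Sobolev embedding $V\hookrightarrow L^\infty([0,1])$ used in Corollary \ref{cor:2}. The extra $-\lambda(\|\pencontro\|_V^2+\|\penhsk\|_V^2)\|W^{n,\varepsilon}\|_H^2\,dt$ coming from the weight cancels these terms provided $\lambda$ is chosen large enough in terms of the Lipschitz and embedding constants, exactly as the factor $e^{-\lambda(\phi(u^h)+\phi(v^h))}$ is used in the uniqueness argument for Theorem \ref{thm:1}.

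Gronwall applied to the expected weighted norm then produces a bound of the form $C\sqrt\varepsilon + C\varepsilon + \delta_n$, where $\delta_n\to 0$ as $n\to\infty$. Passing first $n\to\infty$, using $\pencontro\to Y^\varepsilon$ and $\penhsk\to Z^\varepsilon$ together with the estimates of Lemma \ref{lem:8}, and then sending $\varepsilon\to 0$, yields the claim. The \emph{main obstacle} is to make the penalization estimates uniform enough in both $n$ and $\varepsilon$: the terms from the penalization mix $\pencontro$ and $\penhsk$ and require pairing an $L^\infty$-norm coming from Lemma \ref{lem:8}(iii) with a $V$-norm in a way that only the exponential weight can absorb. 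Threading a single choice of $\lambda$ through this absorption while keeping the stochastic contributions of order $\sqrt\varepsilon$ is the delicate point.
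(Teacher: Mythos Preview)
Your proposal is correct and follows essentially the same route as the paper: pass to the penalized systems \eqref{eq:pen_Y}--\eqref{eq:pen_Z}, apply It\^o's formula to $e^{-\lambda A^{n,\varepsilon}(t)}\|W^{n,\varepsilon}(t)\|_H^2$, bound the drift, control, It\^o-correction, martingale (via BDG), and the two penalization terms exactly as in Lemma~\ref{lem:6}, then apply Gronwall and take the double limit $n\to\infty$, $\varepsilon\to 0$ using Lemma~\ref{lem:8}.

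One clarification on the role of the exponential weight: in the paper's argument the weight is essentially \emph{passive} inside the proof of this lemma. The term $I_1 = -\lambda\int_0^t\psi^n(\|\pencontro\|_V^2+\|\penhsk\|_V^2)\|W^{n,\varepsilon}\|_H^2\,ds$ is simply dropped as nonpositive, and elsewhere one only uses $\psi^n\le 1$. No cross terms of the form $\|\pencontro\|_V^2\|W^{n,\varepsilon}\|_H^2$ actually arise, because the penalization contributions are controlled directly through the a~priori bounds of Lemma~\ref{lem:8} (the $L^\infty$ estimate there is already established and does not need to be re-derived via Sobolev embedding at this stage). The genuine reason for carrying the weight is downstream: in the proof of the main theorem one splits on the event $\int_0^T(\|Y^\varepsilon\|_V^2+\|Z^\varepsilon\|_V^2)\,ds\le M$, on which $\psi\ge e^{-\lambda M}$, so the weighted estimate of this lemma converts to an unweighted one. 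Your picture of the weight absorbing Sobolev cross terms is therefore not needed here, though it does no harm.
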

\begin{proof}
	We only need to show 
	\begin{equation}
  \begin{aligned}
\lim_{\varepsilon \rightarrow 0}\lim_{n\rightarrow \infty}\{\mathbb{E}& {\left[\sup _{0 \leq t \leq T}\left(\exp \left\{-\lambda \int_0^t\left(\left\|Y^{n,\varepsilon}(s)\right\|_V^2+\left\|Z^{n,\varepsilon}(s)\right\|_V^2\right) d s\right\}\left\|Y^{n,\varepsilon}(t)-Z^{n,\varepsilon}(t)\right\|_H^2\right)\right] } \\
&\left.+\mathbb{E}\left[\int_0^T \exp \left\{-\lambda \int_0^t\left(\left\|Y^{n,\varepsilon}(s)\right\|_V^2+\left\|Z^{n,\varepsilon}(s)\right\|_V^2\right) d s\right\}\left\|Y^{n,\varepsilon}(t)-Z^{n,\varepsilon}(t)\right\|_V^2 d t\right]\right\}=0 .
\end{aligned}
\end{equation}

Define 
$$
\psi^n(t) = \exp\dkuo{-\lambda\int_0^t\kuo{\norm{\pencontro(s)}^2_V + \norm{\penhsk(s)}^2_V}ds}
$$
Applying Ito's formula, we have 
\begin{equation}
  \begin{aligned}
  	\norm{\pencontro(t)-\penhsk(t)}^2_H\psi^n(t) &= -\lambda\int_0^t\psi^n\kuo{\norm{\pencontro(s)}^2_V+\norm{\penhsk(s)}^2_V}\norm{\pencontro(s) - \penhsk(s)}^2_Hds\\
  	&-2\int_0^t\psi^n\norm{\pencontro(s)-\penhsk(s)}^2_Vds\\
  	&+2\int_0^t\psi^n\brackt{\pencontro(s)-\penhsk(s),b\kuo{\pencontro(s)}-b\kuo{\penhsk(s)}}ds\\
  	&+2\sqrt{\varepsilon}\sum_{k=1}^m\int_0^t\psi^n\brackt{\pencontro(s)-\penhsk(s),\sigma_j\kuo{\pencontro(s)}}dB_j(s)\\
  	&+\varepsilon\sum_{k=1}^m\int_0^t\psi^n\norm{\sigma_j\kuo{\pencontro(s)}}^2_Hds\\
  	&+2\int_0^t\psi^n\brackt{\pencontro(s)-\penhsk(s),\zkuo{\sigma\kuo{\pencontro(s)}-\sigma\kuo{\penhsk(s)}}\dot{h}^\varepsilon(s)}ds\\
  	&-2n\int_0^t\psi^n\brackt{\pencontro(s)-\penhsk(s),\gamma\kuo{\pencontro(s,x)}\abs{\pencontro(s,x)-\pi\kuo{\pencontro(s,x)}}}ds\\
  	&+2n\int_0^t\psi^n\brackt{\pencontro(s)-\penhsk(s),\gamma\kuo{\penhsk(s,x)}\abs{\penhsk(s,x)-\pi\kuo{\penhsk(s,x)}}}ds\\
  	&=I_1 + I_2+I_3+I_4+I_5+I_6+I_7+I_8
  \end{aligned}
\end{equation}
\begin{equation}
  \begin{aligned}
  	I_3\leq \frac{1}{2}\sup_{0\leq t\leq T}\kuo{\psi^n(s)\norm{\pencontro(s) - \penhsk(s)}^2_H}+ C \int_0^T\psi^n(s)\norm{\pencontro(s)-\penhsk(s)}_H^2ds
  \end{aligned}
\end{equation}

Apply B-D-G inequality, we have 
\begin{equation}\label{eq:I_4}
  \begin{aligned}
  	\e\zkuo{\sup_{0\leq t\leq T}I_4}&\leq 2\sqrt{ \varepsilon}C \e\dkuo{\zkuo{\sum_{k=1}^m\int_0^T\psi^n(s)^2\brackt{\pencontro(s)-\penhsk(s),\sigma_j\kuo{\pencontro(s)}}^2ds}^{\frac{1}{2}}}\\
  	&\leq 2\sqrt{\varepsilon}C_1 \e\dkuo{\sup_{0\leq t\leq T}\psi^n(s)^{\frac{1}{2}}\norm{\pencontro(s)-\penhsk(s)}_H\kuo{\sum_{k=1}^m\int_0^T\psi^n(s)\norm{\sigma_j\kuo{\pencontro(s)}}^2_Hds}^{\frac{1}{2}}}\\
  	&\leq \frac{1}{4}\e\zkuo{\sup_{0\leq t\leq T}\psi^n(s)\norm{\pencontro(s)-\penhsk(s)}^2_H}+ 16\varepsilon C_1^2 \e\zkuo{\int_0^T\psi^n(s)\kuo{1+\norm{\pencontro(s)}_H^2}ds}
  \end{aligned}
\end{equation}
\begin{equation}\label{eq:I_5}
  I_5\leq \varepsilon C\int_0^T\psi^n(s)\kuo{1+\norm{\pencontro(s)}^2_H}ds
\end{equation}
\begin{equation}\label{eq:I_6}
  I_6\leq \frac{1}{2}\sup_{0\leq t\leq T}\kuo{\psi^n(s)\norm{\pencontro(s) - \penhsk(s)}^2_H}+ C \int_0^T\psi^n(s)\norm{\pencontro(s)-\penhsk(s)}_H^2ds
\end{equation}
\begin{equation}\label{eq:I_7}
  \begin{aligned}
  	I_7&=-2n\int_0^t\brackt{\psi^n(s)\kuo{\pencontro(s) - \pi\kuo{\penhsk(s)}},\gamma\kuo{\pencontro(s)}\abs{\pencontro(s)-\pi\kuo{\pencontro(s)}}}ds\\
  	&+2n\int_0^t\brackt{\psi^n(s)\kuo{\penhsk(s)-\pi\kuo{\penhsk(s)}},\gamma\kuo{\pencontro(s)}\abs{\pencontro(s)-\pi\kuo{\pencontro(s)}}}ds\\
  	&\leq C\kuo{\int_0^t\psi^n(s)\norm{\pencontro(s) - \penhsk(s)}^2_Hds}^{\frac{1}{2}}\kuo{n^2\int_0^t\psi^n(s)\norm{\pencontro(s)-\pi\kuo{\pencontro(s)}}^2_Hds}^{\frac{1}{2}}\\
  	&+C\sup_{0\leq s\leq t}\psi^n(s)\norm{\penhsk(s)-\pi\kuo{\penhsk(s)}}_H\times\kuo{n\int_0^t\psi^n(s)\norm{\pencontro(s)-\pi\kuo{\pencontro(s)}}_Hds}\\
  	&+ \sup_{0\leq s\leq t}\psi^n(s)\norm{\penhsk(s)-\pi\kuo{\penhsk(s)}}_{L^\infty([0,1])}\times \kuo{2n \int_0^t\psi^n(s)\norm{\pencontro(s)-\pi\kuo{\pencontro(s)}}_{L^1([0,1])}ds}
  \end{aligned}
\end{equation}

By a similar argument as above ,the same estimate holds for $I_8$.
Combining the estimates for $I_3$, $I_4$, $I_5$, $I_6$, $I_7$ and $I_8$ and taking expectation, we can deduce that 
\begin{equation}
  \begin{aligned}
  	&\e\zkuo{\sup_{0\leq t\leq T}\psi^n(t)\norm{\pencontro(s)-\penhsk(s)}^2_H}+ \e\zkuo{\int_0^T\psi^n(s)\norm{\pencontro(s)-\penhsk(s)}^2_Vds}\\
  	&\leq \frac{5}{4}\e\zkuo{\sup_{0\leq t\leq T}\psi^n(t)\norm{\pencontro(t)-\penhsk(t)}^2_H}+16\varepsilon C_1^2\e\zkuo{\int_0^T\psi^n(s)\kuo{1+\norm{\pencontro(s)}}^2_Hds}\\
  	&+\varepsilon C\e\zkuo{\int_0^T\psi^n(s)\kuo{1+\norm{\pencontro(s)}^2_H}ds} + \frac{1}{2}\e\zkuo{\sup_{0\leq t\leq T}\kuo{\psi^n(s)\norm{\pencontro(s)-\penhsk(s)}^2_H}}\\
  	&+C\e\zkuo{\int_0^T\psi^n(s)\norm{\pencontro(s)-\penhsk(s)}_H^2ds}+C_2\left\{\e\zkuo
  	\int_0^T\psi^n(s)\norm{\pencontro(s)-\penhsk(s)}^2_Hds\right.\\
  	&\left.+\e\zkuo{\sup_{0\leq t\leq T}\psi^n(t)\norm{\penhsk(t)-\pi\kuo{\penhsk(t)}}_H^2}+\e\zkuo{\sup_{0\leq t \leq T }\norm{\pencontro(s)-\pi\kuo{\pencontro(s)}}^2_H}\right\}^{\frac{1}{2}}\\
  	&+C_3\dkuo{\e\zkuo{\sup_{0\leq t\leq T}\psi^n(s)\norm{\penhsk(s)-\pi\kuo{\penhsk(s)}}^2_{L^{\infty}([0,1])}}}^{\frac{1}{2}}\\
  	&\times \dkuo{\e\zkuo{\kuo{2n\int_0^T\psi^n(s)\norm{\penhsk(s)-\pi\kuo{\penhsk(s)}}_{L^1([0,1])}}}^2}^{\frac{1}{2}}
  \end{aligned}
\end{equation}

By Gronwall's inequality, $\psi^n(s)\leq 1$ as $\lambda$ enough large, and apply Lemma \ref{lem:8}, we have 
%%%%

\begin{equation}
  \begin{aligned}
  	&\e\zkuo{\sup_{0\leq t\leq T}\psi^n(t)\norm{\pencontro(s)-\penhsk(s)}^2_H}+ \e\zkuo{\int_0^T\psi^n(s)\norm{\pencontro(s)-\penhsk(s)}^2_Vds}\\
  	&\leq C_4\dkuo{\e\zkuo{\sup_{0\leq t\leq T}\norm{\penhsk(t)-\pi\kuo{\penhsk(t)}}_{L^\infty([0,1])}^2}}^{\frac{1}{2}}\\
  	&+C_5\dkuo{\e\zkuo{\sup_{0\leq t\leq T}\norm{\pencontro(s)-\pi\kuo{\pencontro(s)}}^2_{L^\infty([0,1])}}}^{\frac{1}{2}}
  \end{aligned}
\end{equation}
Then we have 
%%%%

	\begin{equation}
  \begin{aligned}
\lim_{\varepsilon \rightarrow 0}\lim_{n\rightarrow \infty}\{\mathbb{E}& {\left[\sup _{0 \leq t \leq T}\left(\exp \left\{-\lambda \int_0^t\left(\left\|Y^{n,\varepsilon}(s)\right\|_V^2+\left\|Z^{n,\varepsilon}(s)\right\|_V^2\right) d s\right\}\left\|Y^{n,\varepsilon}(t)-Z^{n,\varepsilon}(t)\right\|_H^2\right)\right] } \\
&\left.+\mathbb{E}\left[\int_0^T \exp \left\{-\lambda \int_0^t\left(\left\|Y^{n,\varepsilon}(s)\right\|_V^2+\left\|Z^{n,\varepsilon}(s)\right\|_V^2\right) d s\right\}\left\|Y^{n,\varepsilon}(t)-Z^{n,\varepsilon}(t)\right\|_V^2 d t\right]\right\}=0 .
\end{aligned}
\end{equation}
\end{proof}

\begin{theorem}
	$u^\varepsilon$ be the unique solution to \eqref{eq:sn_SPDE}. Then the family of $\dkuo{u^\varepsilon}_{\varepsilon>0}$ satisfies a LDP on the space $\mathcal{E}$ with rare function 
	$$
	   I(g) = \inf_{\dkuo{h\in \mathcal{H},g = \mathcal{G}^0\kuo{\int_0^\cdot \dot{h}(s)ds}}}\dkuo{\frac{1}{2}\norm{h}^2_{\mathcal{H}}}
	$$
\end{theorem}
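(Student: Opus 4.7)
The plan is to verify the two hypotheses (LDP1) and (LDP2) of Theorem~\ref{thm:LDP_condition} with $\mathcal{G}^\varepsilon$ mapping $B(\cdot)$ to the solution $u^\varepsilon$ of \eqref{eq:sn_SPDE} and $\mathcal{G}^0$ mapping $\int_0^\cdot \dot{h}(s)\,ds$ to the skeleton solution $u^h$ of \eqref{eq:skeleton}. The well-posedness of $\mathcal{G}^0$ is guaranteed by Theorem~\ref{thm:1}, while the existence of $\mathcal{G}^\varepsilon$ follows from \cite{Duan2019WhiteND}. By the Girsanov transform, the process $Y^\varepsilon = \mathcal{G}^\varepsilon\bigl(B(\cdot) + \tfrac{1}{\sqrt{\varepsilon}}\int_0^\cdot \dot{h}^\varepsilon(s)\,ds\bigr)$ is exactly the solution of the controlled SPDE \eqref{eq:c_SPDE}, and $Z^\varepsilon = \mathcal{G}^0\bigl(\int_0^\cdot \dot{h}^\varepsilon(s)\,ds\bigr)$ solves \eqref{eq:Z}.

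The verification of (LDP2) is essentially a direct consequence of Proposition~\ref{pro:1}. Indeed, if $\{h^\varepsilon\}\subset S_N$ converges weakly to $h$ as $\varepsilon\to 0$, then Proposition~\ref{pro:1} yields $(u^{h^\varepsilon},\eta^{h^\varepsilon})\to(u^h,\eta^h)$ in the topology of $\mathcal{E}=C([0,T];H)\cap L^2([0,T];V)$, which is precisely the required convergence $\mathcal{G}^0\bigl(\int_0^\cdot \dot{h}^\varepsilon(s)\,ds\bigr)\to\mathcal{G}^0\bigl(\int_0^\cdot \dot{h}(s)\,ds\bigr)$.

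The main obstacle is (LDP1), and the key input is Lemma~\ref{lem:LDP_2}, which provides the weighted convergence
\[
\lim_{\varepsilon\to 0}\e\!\left[\sup_{0\le t\le T}\psi^\varepsilon(t)\|Y^\varepsilon(t)-Z^\varepsilon(t)\|_H^2 + \int_0^T \psi^\varepsilon(t)\|Y^\varepsilon(t)-Z^\varepsilon(t)\|_V^2\,dt\right]=0,
\]
where $\psi^\varepsilon(t)=\exp\bigl\{-\lambda\int_0^t(\|Y^\varepsilon(s)\|_V^2+\|Z^\varepsilon(s)\|_V^2)\,ds\bigr\}$. To pass from this weighted estimate to convergence in probability, I plan to introduce, for $M>0$, the event
\[
A^\varepsilon_M:=\Bigl\{\int_0^T\bigl(\|Y^\varepsilon(s)\|_V^2+\|Z^\varepsilon(s)\|_V^2\bigr)\,ds\le M\Bigr\}.
\]
On $A^\varepsilon_M$ we have $\psi^\varepsilon(t)\ge e^{-\lambda M}$, while Lemma~\ref{lem:LDP_1} combined with Markov's inequality gives $\sup_{\varepsilon>0}\mathbb{P}((A^\varepsilon_M)^c)\le C/M$, uniformly in $h^\varepsilon\in\mathcal{A}_N$.

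Fixing the natural metric $\rho$ on $\mathcal{E}$ (sup-$H$-norm plus $L^2$-$V$-norm), for any $\delta>0$ I split
\[
\mathbb{P}\bigl(\rho(Y^\varepsilon,Z^\varepsilon)>\delta\bigr)\le \mathbb{P}\bigl((A^\varepsilon_M)^c\bigr) + \mathbb{P}\bigl(\{\rho(Y^\varepsilon,Z^\varepsilon)>\delta\}\cap A^\varepsilon_M\bigr),
\]
bound the second term on the right by Markov's inequality using that on $A^\varepsilon_M$ one has $\rho(Y^\varepsilon,Z^\varepsilon)^2\le e^{\lambda M}\bigl[\sup_t\psi^\varepsilon\|Y^\varepsilon-Z^\varepsilon\|_H^2+\int_0^T\psi^\varepsilon\|Y^\varepsilon-Z^\varepsilon\|_V^2\,dt\bigr]$, let $\varepsilon\to 0$ (invoking Lemma~\ref{lem:LDP_2}) to kill the second term, and finally let $M\to\infty$ to kill the first. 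This establishes (LDP1), and Theorem~\ref{thm:LDP_condition} then delivers the stated LDP with the claimed rate function. The delicate step throughout will be to ensure that the estimates of Lemmas~\ref{lem:LDP_1}--\ref{lem:LDP_2} remain uniform over $\{h^\varepsilon\}\subset\mathcal{A}_N$, which is why the bound $N$ on the control norm enters into all the intermediate constants.
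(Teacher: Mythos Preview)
Your proposal is correct and follows essentially the same route as the paper: reduce to (LDP1) and (LDP2) of Theorem~\ref{thm:LDP_condition}, dispatch (LDP2) via Proposition~\ref{pro:1}, and for (LDP1) split on the event $\{\int_0^T(\|Y^\varepsilon\|_V^2+\|Z^\varepsilon\|_V^2)\,ds\le M\}$, using Lemma~\ref{lem:LDP_1} with Markov on the complement and Lemma~\ref{lem:LDP_2} (together with $\psi^\varepsilon\ge e^{-\lambda M}$ on the event) for the main part. The paper phrases the final limit as an $\alpha/2$--$\alpha/2$ argument (choose $M$ first, then $\varepsilon_0$ depending on $M$), which is logically equivalent to your ``$\varepsilon\to 0$ then $M\to\infty$'' limsup formulation.
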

	\begin{proof}%[\textbf{Proof of LDP}]
According to Theorem \ref{thm:LDP_condition}, to complete the proof of this theorem, it is sufficient to verify \textbf{(LDP1)} and \textbf{(LDP2)}.

From Proposition \ref{pro:1}, we have established the \textbf{(LDP2)}. Then we only to prove \textbf{(LDP1)} is true.

Let $\psi = \lim_{n\rightarrow \infty}\psi^n$.
	\begin{equation}\label{eq:LDP_1}
\begin{aligned}
& \mathbb{P}\left(\sup _{0 \leq t \leq T}\left\|Y^{\varepsilon}(t)-Z^{\varepsilon}(t)\right\|_H^2+\int_0^T\left\|Y^{\varepsilon}(t)-Z^{\varepsilon}(t)\right\|_V^2 d t>\delta\right) \\
=& \mathbb{P}\left(\sup _{0 \leq t \leq T}\left\|Y^{\varepsilon}(t)-Z^{\varepsilon}(t)\right\|_H^2+\int_0^T\left\|Y^{\varepsilon}(t)-Z^{\varepsilon}(t)\right\|_V^2 d t>\delta, \int_0^T\left(\left\|Y^{\varepsilon}(s)\right\|_V^2+\left\|Z^{\varepsilon}(s)\right\|_V^2\right) d s>M\right) \\
&+\mathbb{P}\left(\sup _{0 \leq t \leq T}\left\|Y^{\varepsilon}(t)-Z^{\varepsilon}(t)\right\|_H^2+\int_0^T\left\|Y^{\varepsilon}(t)-Z^{\varepsilon}(t)\right\|_V^2 d t>\delta, \int_0^T\left(\left\|Y^{\varepsilon}(s)\right\|_V^2+\left\|Z^{\varepsilon}(s)\right\|_V^2\right) d s \leq M\right) \\
\leq & \mathbb{P}\left(\int_0^T\left(\left\|Y^{\varepsilon}(s)\right\|_V^2+\left\|Z^{\varepsilon}(s)\right\|_V^2\right) d s>M\right) \\
&+\mathbb{P}\left(\sup _{0 \leq t \leq T} \exp ^2\left\{-\lambda \int_0^t\left(\left\|Y^{\varepsilon}(s)\right\|_V^2+\left\|Z^{\varepsilon}(s)\right\|_V^2\right) d s\right\}\left\|Y^{\varepsilon}(t)-Z^{\varepsilon}(t)\right\|_H^2\right.\\
&\left.+\int_0^T \exp \left\{-\lambda \int_0^t\left(\left\|Y^{\varepsilon}(s)\right\|_V^2+\left\|Z^{\varepsilon}(s)\right\|_V^2\right) d s\right\}\left\|Y^{\varepsilon}(t)-Z^{\varepsilon}(t)\right\|_V^2 d t \geq e^{-\lambda M} \delta\right) \\
\leq & \frac{1}{M} \mathbb{E}\left[\int_0^T\left(\left\|Y^{\varepsilon}(s)\right\|_V^2+\left\|Z^{\varepsilon}(s)\right\|_V^2\right) d s\right] \\
&+\frac{e^{\lambda M}}{\delta} \mathbb{E}\left[\sup _{0 \leq t \leq T} \psi(t)\left\|Y^{\varepsilon}(t)-Z^{\varepsilon}(t)\right\|_H^2+\int_0^T E_{Y^{\varepsilon}, Z^{\varepsilon}}(t)\left\|Y^{\varepsilon}(t)-Z^{\varepsilon}(t)\right\|_V^2 d t\right] .
\end{aligned}
\end{equation}

From Lemma \ref{lem:LDP_1}, for any $\alpha >0$, exists $M>0$ such that 
\begin{equation}\label{eq:LDP_2}
  \frac{1}{M}\e\zkuo{\int_0^T\kuo{\norm{Y^\varepsilon(s)}^2_V + \norm{Z^\varepsilon(s)}^2_V}ds}\leq \frac{\alpha}{2}
\end{equation}

From Lemma \ref{lem:LDP_2}, there exists $\varepsilon_0$, for any $\varepsilon\in (0,\varepsilon_0)$ we have 
\begin{equation}\label{eq:LDP_3}
\frac{e^{\lambda M}}{\delta} \mathbb{E}\left[\sup _{0 \leq t \leq T} \psi(t)\left\|Y^{\varepsilon}(t)-Z^{\varepsilon}(t)\right\|_H^2+\int_0^T \psi(t)(t)\left\|Y^{\varepsilon}(t)-Z^{\varepsilon}(t)\right\|_V^2 d t\right] \leq \frac{\alpha}{2} .
\end{equation}

Combing \eqref{eq:LDP_1}, \eqref{eq:LDP_2} and \eqref{eq:LDP_3}, we have for any $\varepsilon \in \kuo{0,\varepsilon_0}$
\begin{equation}
\mathbb{P}\left(\sup _{0 \leq t \leq T}\left\|Y^{\varepsilon}(t)-Z^{\varepsilon}(t)\right\|_H^2+\int_0^T\left\|Y^{\varepsilon}(t)-Z^{\varepsilon}(t)\right\|_V^2 d t>\delta\right) \leq \alpha
\end{equation}

Therefore we can easily show that
\begin{equation}
\lim_{\varepsilon \rightarrow 0}\mathbb{P}\left(\sup _{0 \leq t \leq T}\left\|Y^{\varepsilon}(t)-Z^{\varepsilon}(t)\right\|_H^2+\int_0^T\left\|Y^{\varepsilon}(t)-Z^{\varepsilon}(t)\right\|_V^2 d t>\delta\right) =0.
\end{equation}
\end{proof}

\bibliographystyle{plain}
\bibliography{ref}
\end{document}